\documentclass[11pt]{article}

\usepackage[usenames,dvipsnames,condensed]{xcolor}

\usepackage{amsthm}
\theoremstyle{definition}
\usepackage{amsfonts}
\usepackage[UKenglish]{babel}
\usepackage[usenames]{xcolor}
\usepackage{graphicx}
\usepackage{soul}
\usepackage{stfloats}
\usepackage{morefloats}
\usepackage{cite}
\usepackage{lscape}
\usepackage{epstopdf}
\usepackage{overpic}

\setlength{\textwidth}{6.5in}
\setlength{\topmargin}{-0.5in}
\setlength{\textheight}{9in}
\setlength{\oddsidemargin}{0in}
\setlength{\evensidemargin}{0in}
\usepackage{amsmath,amstext,amsopn,amsfonts,eucal,amssymb}
\usepackage{graphicx,wrapfig,url}

\newcommand\Z{{\mathbb Z}}

\newtheorem{theorem}{Theorem}[section]

\newtheorem{lemma}[theorem]{Lemma}
\newtheorem{proposition}[theorem]{Proposition}
\newtheorem{definition}[theorem]{Definition}

\newtheorem{conjecture}[theorem]{Conjecture}

\newcommand{\gr}{{\rm gr}}

\begin{document}

\title{
   Genus  Ranges of   
Chord Diagrams }

\author{Jonathan  Burns,	
Nata\v{s}a Jonoska, 
Masahico Saito
\\
Department of Mathematics and Statistics\\
University of South Florida}

\date{\empty}

\maketitle

\begin{abstract}

A chord diagram consists of a circle, called the backbone, with line segments, called chords, whose endpoints 
are attached to distinct points on the circle. The genus of a chord diagram
is the genus of the orientable surface obtained by thickening the backbone to an annulus and attaching bands  
to the  inner boundary circle at the ends of each chord. Variations of this construction are considered 
here, where bands are  possibly attached to the outer boundary circle of the annulus. The genus range of a 
chord diagram is the genus values over all such variations of surfaces thus obtained from  a given chord 
diagram. Genus ranges  of chord diagrams  for a fixed number of chords are studied. Integer intervals that can, 
and cannot, be realized as genus ranges are investigated. Computer calculations are presented, and play a key 
role in discovering and proving the properties of genus ranges.

\end{abstract}

\section{Introduction}

A chord diagram is a circle (called the backbone) with line segments (called chords) attached at their endpoints. 
Chord diagrams have been extensively used in knot theory and its applications, as well as in physics and biology. 
They are main tools for finite type knot invariants \cite{Dror}, and are also used for describing RNA secondary
structures \cite{APRW}, for example. A chord diagram is usually 
depicted as 
a circle  in the plane with chords inside the circle. The chords may intersect in the circle, but such intersections 
are ignored (chords are regarded as pairwise disjoint). 

The genus of a chord diagram
is the genus of the orientable surface obtained by thickening the backbone to an annulus and attaching bands 
to the  inner boundary circle at the ends of each chord, and it has been studied earlier in
the context of 
knot theory. In \cite{STV}, for example, it was pointed out that the genus of a  chord diagram equals that  of a 
surface obtained from the Seifert algorithm, a standard construction of orientable surfaces bounded by knots 
from diagrams. This fact was used in \cite{STV}
to define the genus of virtual knots  as the minimum of such genera 
over all virtual knot diagrams. Such genera was used in \cite{APRW} for the study of RNA foldings.
Thickened chord diagrams were used for the study of DNA structures as well \cite{Jonoska2002}. 

The genus of a chord diagram is defined by attaching bands at chord endpoints on the inner boundary circle 
of the annulus as mentioned above, and different surfaces could be obtained if some bands are allowed to 
be attached  on the outer boundary circle of the annulus. It is, then, natural to ask which integers arise as 
genera of surfaces  if such variants are allowed for thickened chord diagrams. Specifically, we consider the 
following questions.

\bigskip

\noindent
{\bf Problem.} For a given positive integer $n$, 
(1) determine which sets of integers appear as genus ranges of chord
diagrams with $n$ chords, and 
(2) characterize chord diagrams with $n$ chords that have a specified genus range.

\bigskip

The genus range of graphs 
has 
been studied in topological graph theory~\cite{MoharThomassenBook}. Our focus in this paper is on a special 
class of trivalent graphs that arise as chord diagrams, and 
the 
behavior of their genus ranges for a fixed number of chords.
The genus ranges of 4-regular rigid vertex graphs were studied in \cite{BDJSV}, where the embedding of 
rigid vertex graphs  is 
required to preserve 
the given cyclic order of edges at every vertex.

The paper is organized as follows. Preliminary material is presented in Section~\ref{sec-prelim}. A method of 
computing the genus by the Euler characteristic is given in Section~\ref{sec-comp}, where results of computer 
calculations are also presented. In Section~\ref{sec-prop}, various properties of genus ranges are described, 
and some sets of  integers are realized as genus ranges in Section~\ref{sec-realize}. In Section~\ref{sec-chara},  
 results from Sections ~\ref{sec-prop}
and 
\ref{sec-realize} are combined to summarize our findings on which 
sets of integers can and cannot be realized as  genus ranges of chord diagrams for a fixed number of chords. 
We also list the sets for which realizability as the genus range of a chord diagram has yet to be determined, and end with some short concluding remarks.

\section{Terminology and Preliminaries}\label{sec-prelim}

This section contains the definitions of the concepts, their basic properties, and the 
notations used in this paper. 

A {\it chord diagram} consists of a finite number of {\it chords}, that are closed arcs, with their endpoints 
attached to  a circle, called the {\it backbone}. An example of a chord diagram is given in Fig.~\ref{thicken} (A).
For 
more details  and the background of chord diagrams, see, for example, \cite{Dror,STV}. 

\begin{figure}[htb]
  \begin{center}
    \includegraphics[width=4in]{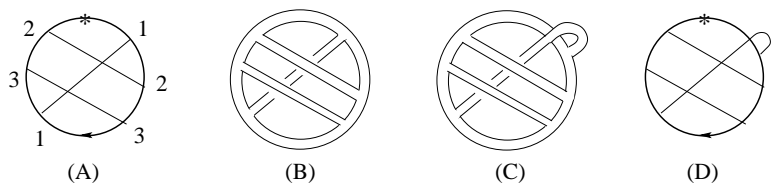}\\
    \caption{(A) An example of a chord diagram 123132,  $*$ indicates the base point;
      (B) and (C) Two examples of thickened chord diagrams corresponding to the chord diagram in (A); 
      (D) Schematic representation of the thickened diagram in (C).}
    \label{thicken}
  \end{center}
\end{figure}

A {\it double-occurrence}  word $w$ over an alphabet set is a word which contains each symbol of the alphabet 
set exactly $0$ or $2$ times. Double-occurrence words are  also called {\it (unsigned) Gauss codes} in knot 
theory \cite{Kauff}.

For a given chord diagram, we obtain a double-occurrence word as follows. If it has $n$ chords, assign distinct 
labels (e.g.,  positive integers $\{ 1, \ldots, n\}$) to the chords. The endpoints of the chords lying on the 
backbone inherit the labels of the corresponding chords. Pick and fix a base point $*$ on the backbone 
of a chord diagram. The sequence of endpoint labels obtained by tracing the backbone in one direction 
(say, clockwise) forms a double-occurrence word corresponding to the chord diagram. Conversely, for a given 
double-occurrence word, a chord diagram corresponding to the word is obtained by choosing distinct points  
on a circle such that each point corresponds to a letter in the word in the order of their appearance, and then 
connecting each pair of points of the same letter by a chord. The chord diagram in Fig.~\ref{thicken} (A) has 
the corresponding double-occurrence word $123132$. Equivalence relations are defined on chord diagrams 
and double-occurrence words in such a way that this correspondence is bijective. Two double-occurrence 
words are equivalent if they are related by cyclic permutations, reversal, and/or symbol renaming.

\smallskip
\noindent 
{\bf Notation.} Applying the above mentioned correspondence between chord diagrams and double-occurrence 
words, in this paper a double-occurrence word $W$ also represents the corresponding chord diagram. 

A {\it thickened} chord diagram (or simply a {\it thickened diagram})  is a compact orientable surface obtained 
from a given chord diagram by thickening its backbone circle and chords as depicted in Fig.~\ref{thicken} (B), (C).
The backbone  is thickened to an annulus. A band corresponding to each chord is attached to one of two 
boundary circles of the annulus. In literature (e.g., \cite{APRW,STV}), all bands are attached to the inner 
boundary of the thickened circle as in Fig.~\ref{thicken} (B), and in this case we say that chords are {\it all-in}, 
or that the chord diagram is of {\it all-in}. For a chord diagram $D$ we denote with $F_D$ the all-in thickened 
chord diagram corresponding to $D$. In this paper, we consider thickened chord diagrams  with band ends 
possibly attached  to the outer boundary circle of the annulus, as is one of the ends of chord $1$ in (C). Since 
each endpoint of a chord has two possibilities of band ends attachments (inner or outer), there are 4 possible 
band attachment cases for each chord, in total $4^n$ surfaces obtained from a chord diagram with $n$ chords. 
To simplify exposition, we draw an endpoint of a chord attached to the outer side of the backbone as in 
Fig.~\ref{thicken} (D) to indicate that the corresponding thickened diagram is obtained by attaching the 
corresponding band end to the outer boundary of the annulus. A band whose one end is connected to the 
outside curve of the annulus and the other is connected to the inside part of the curve is said to be a 
{\it one-in, one-out chord}.

\smallskip
\noindent 
{\bf Convention.} We assume that all surfaces are orientable throughout the paper.

\begin{definition}
Let $g(F)$ denote the genus of a compact  orientable surface $F$. The {\it genus range} of a chord diagram 
$D$ is the set of genera of thickened chord diagrams, and denoted by $\gr(D)$: 
$$ \gr(D)=\{ \, g(F) \ | \  F \mbox{  is a thickened chord diagram of } D \, \} .$$
\end{definition}

\begin{figure}[h]
  \begin{center}
    \includegraphics[width=9cm]{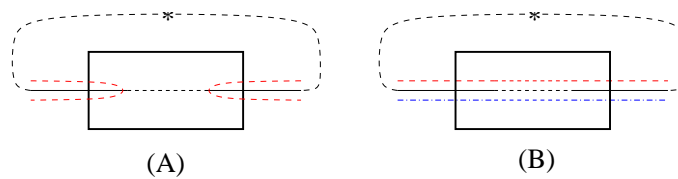} 
    \caption{End edges traced by (A) single and (B) two boundary curve(s).} 
    \label{endedge}
  \end{center}
\end{figure} 

We use the following terminology in the later sections. The closed backbone arc which is a portion of the 
backbone  between  the first and the last endpoints containing the base point is called the {\it end edge}.
Because the backbone and the chords are thickened  to  bands that constitute a thickened diagram, we 
regard that each backbone arc and each chord has two corresponding boundary curve segments, which may 
or may not belong to the same connected component of the boundary. In particular, the boundary curves 
corresponding to the end edge may belong to one or two boundary components, as depicted in 
Fig.~\ref{endedge} (A) and (B), respectively. In each case, we say that the end edge  is {\it traced by a 
single (resp. double) boundary curve(s)}.

\section{Computing the Genus Range of a chord diagram} \label{sec-comp}

In this section we recall the Euler characteristic formula used to compute the genus ranges by counting 
the number of boundary components, and present outputs of computer calculations.

\subsection{Euler characteristic formula}

First we recall the well-known Euler characteristic formula, establishing the relation between the genus and the 
number of boundary components. The Euler characteristic $\chi(F)$ of a compact orientable surface $F$ of 
genus $g(F)$ and the number of boundary components $b(F)$ of $F$ are related by $\chi(F)=2-2 g(F) - b(F)$. 

A thickened chord diagram $F$  is a compact surface with the original chord diagram $D$ as a deformation 
retract. If the number of chords is $n >0$, $n \in \Z$, then there are $2n$ vertices in $D$ and $3n$ edges 
($n$ chords and $2n$ arcs on the backbone), so that $\chi(F)=\chi(D)=2n - 3n=-n$. Thus we obtain the 
following well known formula, which we state as a lemma, as we will use it often in this paper. 

\begin{lemma}     \label{lem-euler}
Let $F$ be a thickened chord diagram of a chord diagram $D$. Let $g(F)$ be the genus of $F$, $b(F)$ be 
the number of boundary components of $F$, and $n$ be the number of chords of $D$. Then we have 
$g(F) =(1/2) (n - b(F) + 2 )$. 
\end{lemma}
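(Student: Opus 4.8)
The plan is to derive the stated formula directly from the Euler characteristic relation $\chi(F) = 2 - 2g(F) - b(F)$ together with the computation of $\chi(F)$ that has already been carried out in the paragraph preceding the lemma. That paragraph observes that a thickened chord diagram $F$ deformation retracts onto the underlying chord diagram $D$, viewed as a CW complex (equivalently, a graph) with $2n$ vertices (the two endpoints of each of the $n$ chords on the backbone) and $3n$ edges ($n$ chords together with the $2n$ backbone arcs cut out by those $2n$ endpoints), and hence $\chi(F) = \chi(D) = 2n - 3n = -n$.

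The main step is then purely algebraic: since the genus and Euler characteristic are homotopy/homeomorphism invariants (and $\chi$ in particular is a homotopy invariant), substituting $\chi(F) = -n$ into $\chi(F) = 2 - 2g(F) - b(F)$ gives $-n = 2 - 2g(F) - b(F)$, and solving for $g(F)$ yields $g(F) = \tfrac{1}{2}(n - b(F) + 2)$, which is exactly the claimed identity. One should record explicitly why $F$ is a compact orientable surface to which the Euler characteristic formula applies: orientability is the standing convention of the paper, and compactness is clear since $F$ is built from finitely many bands glued to an annulus.

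There is essentially no obstacle here — the lemma is a bookkeeping consequence of facts already established in the excerpt, and the only thing worth being careful about is the edge count. I would spell out that attaching $2n$ distinct points to the backbone circle subdivides it into $2n$ arcs, so that the graph $D$ has $3n = n + 2n$ edges, and note that this count (and hence $\chi(D) = -n$) is independent of which boundary circle of the annulus each band is attached to, since all $4^n$ thickened diagrams share the same deformation retract $D$. If one wishes to be fully self-contained rather than quoting the preceding paragraph, the deformation retraction can be described concretely: collapse each band (whether a chord band or a backbone band) onto its core arc, which carries $F$ onto $D$ without changing the homotopy type, hence without changing $\chi$.
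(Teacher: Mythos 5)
Your argument is correct and coincides with the paper's own justification, which appears in the paragraph immediately preceding the lemma: compute $\chi(F)=\chi(D)=2n-3n=-n$ via the deformation retraction onto the graph $D$ with $2n$ vertices and $3n$ edges, then substitute into $\chi(F)=2-2g(F)-b(F)$ and solve for $g(F)$. Your additional remarks (compactness, orientability, independence of the band-attachment choices) are harmless elaborations of the same argument.
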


Thus we can compute the genus range from the set of  the numbers of  boundary components of thickened 
chord diagrams, $\{ \, b(F) \mid F \mbox{ is a thickened chord diagram of }  D \, \}$. Note that $n$ and 
$b(F)$ have the same parity, as genera are integers.

\subsection{Computer calculations}

In \cite{APRW},  the genera of chord diagrams was defined (which is the genus of all-in chord diagrams), and 
an algorithm to compute the number of graphs with a given genus and $n$ chords by means of cycle 
decompositions of permutations was presented. Our computer calculation is based on a modified version of 
their algorithm. The computational results are posted at \url{http://knot.math.usf.edu/data/} under {\it Tables}.

Computer calculations show that the sets of all possible genus ranges of chord diagrams with $n$ letters for 
$n=1, \ldots, 7$ are as follows.

\begin{table}[h]
\centering

\begin{tabular}{rl}
		& ${\cal GR}_n$\\
$n$ = 1, 2 :	& \{0,1\} \\
$n$ = 3, 4 :	& \{0,1\}, \{0,1,2\}, \{1,2\}  \\
$n$ = 5, 6 : 	& \{0,1\}, \{0,1,2\}, \{1,2\}, \{0,1,2,3\},  \{1,2,3\}  \\
$n$ = 7 : 	& \{0,1\}, \{0,1,2\}, \{1,2\}, \{0,1,2,3\},  \{1,2,3\}, \{0,1,2,3,4\}, \{1,2,3,4\}, \{2,3,4\}
\end{tabular}
\end{table}

The following conjectures hold for all examples we computed.

\begin{conjecture}      \label{conj-range2}
For any $n>0$, if a chord diagram with $n$ chords has genus range consisting of  two numbers, then the 
genus range is either  $\{ 0,1\}$ or $\{1,2\}$.
\end{conjecture}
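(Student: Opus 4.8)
First I would record that $\gr(D)$ is always an interval of consecutive integers. Any two thickened diagrams of $D$ are joined by a sequence of \emph{elementary moves}, each of which slides a single band end of a single chord between the inner and the outer boundary circle of the annulus. Such a move leaves the underlying cell structure intact (so $\chi(F)=-n$ throughout, as in the computation preceding Lemma~\ref{lem-euler}) and changes the number $b(F)$ of boundary components by at most $2$, since it can only merge, split, or fix the boundary curve(s) incident to that one band end. By Lemma~\ref{lem-euler} the genus then changes by at most $1$ along the move, so $\gr(D)$ is exactly the set of integers in $[\,g_{\min}(D),g_{\max}(D)\,]$. A two-element genus range is therefore $\{k,k+1\}$ for some $k\ge 0$, and it suffices to show that $g_{\min}(D)\ge 2$ forces $g_{\max}(D)\ge g_{\min}(D)+2$.

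\textbf{Step 2: an algebraic model and a sharper target.}
I would phrase genus through interlacement. It is well known that $2\,g(F_D)=\rank_{\F_2}I(D)$, where $I(D)$ is the interlacement (adjacency) matrix of the chords of $D$; more generally each thickening $F$ is an orientable ribbon graph on the cubic graph $\Gamma_D$ consisting of the backbone cycle together with the $n$ chords, and $2\,g(F)$ equals the $\F_2$-rank of the interlacement matrix of the co-tree edges of $\Gamma_D$ (relative to the spanning path along the backbone), this matrix being determined by the rotation system that encodes the chosen in/out pattern. In this language $g_{\min}(D)$ and $g_{\max}(D)$ are half the minimum and maximum $\F_2$-ranks over this allowed family of symmetric matrices. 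A clean statement that would finish the proof — and one consistent with, and tight on, every computed example (equality holds for the ranges $\{1,2\}$ and $\{2,3,4\}$) — is
$$ g_{\max}(D)\ \ge\ 2\,g_{\min}(D), $$
because a range $\{g_{\min},g_{\min}+1\}$ would then give $g_{\min}+1\ge 2g_{\min}$, i.e.\ $g_{\min}\le 1$. So the plan is to attack this inequality; logically one needs only the weaker assertion $g_{\max}(D)\ge g_{\min}(D)+2$ when $g_{\min}(D)\ge 2$.

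\textbf{Step 3: doubling the handles of a minimal thickening.}
Fix a minimum-genus thickening $F$, so $g(F)=g_{\min}(D)\ge 2$. Minimality makes $F$ rigid: every elementary move out of $F$ is nondecreasing on genus, so in the model of Step~2 the interlacement matrix of $F$ is reduced with respect to all single endpoint swaps. Since $g(F)\ge 2$, that matrix has rank $\ge 4$ (equivalently $H_1(F;\F_2)$ has rank $\ge 4$), and the aim is to locate two chords, or two mutually non-interlacing small blocks of chords, $A$ and $B$ that carry two of these handles independently — meaning that the endpoint swaps modifying $A$ and those modifying $B$ act on disjoint principal blocks of the matrix. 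A swap inside $A$ then yields a thickening of genus $g_{\min}+1$, and swaps inside both $A$ and $B$ one of genus $g_{\min}+2$. A parallel, more combinatorial route uses that both $g_{\min}$ and $g_{\max}$ are monotone nondecreasing under adding a chord (deleting a chord band changes $\chi$ by $1$ and $b$ by $\pm1$, hence $g$ by $0$ or $1$): pass to a sub-diagram $D_0\subseteq D$ that is \emph{critical} for $g_{\min}=2$, meaning $g_{\min}(D_0)=2$ but $g_{\min}$ drops below $2$ upon deleting any chord of $D_0$; if, as seems plausible, there are only finitely many such $D_0$, classify them, check $g_{\max}(D_0)\ge 4$ in each case, and then bootstrap to the cases $g_{\min}(D)\ge 3$.

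\textbf{The main obstacle.}
The heart of the matter is Step~3: guaranteeing that the two genus-raising swaps really are independent, so that the genus rises by $1$ and then by $1$ again rather than stalling. This is precisely where the restriction to the $4^n$ chord-compatible rotation systems (rather than all rotation systems of $\Gamma_D$) bites, and where one must control the interplay between ordinary in-in/out-out chords and one-in-one-out chords — note that the minimal thickening $F$ need not be all-in, which is why a purely interlacement-matrix argument about $F_D$ alone does not suffice. A secondary difficulty is that $g_{\max}(D)\ge 2g_{\min}(D)$, although tight for small $n$, may well fail for large $n$; in that event only $g_{\max}(D)\ge g_{\min}(D)+2$ survives, and the doubling must be carried out with a finer, likely inductive, bookkeeping of $\F_2$-ranks in place of the crude factor-of-two bound.
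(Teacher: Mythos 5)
The statement you are trying to prove is not a theorem of the paper: it is Conjecture~\ref{conj-range2}, which the authors leave open, supporting it only by computer calculations for $n\le 7$ and by the partial result Lemma~\ref{lem-range3words} (any diagram containing $123321$ as a sub-chord diagram has genus range of size at least $3$). So there is no paper proof to compare against, and your proposal would need to stand entirely on its own. It does not: Step~3, which you yourself flag as ``the main obstacle,'' is exactly the missing content. Nothing in the argument guarantees that a minimum-genus thickening $F$ with $g(F)\ge 2$ admits two genus-raising endpoint swaps acting on ``disjoint blocks,'' i.e.\ two moves whose effects on the boundary-curve structure are independent; a single swap raising the genus by $1$ is easy (that is essentially Lemma~\ref{lem-nosingleton}), but the second independent raise is precisely what a proof of the conjecture must supply, and your sketch only states the hope that such a pair of chords or blocks can be located. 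The alternative route via ``critical'' sub-diagrams $D_0$ with $g_{\min}(D_0)=2$ rests on the unproven assumptions that there are finitely many of them, that they can be classified, and that a bootstrap to $g_{\min}(D)\ge 3$ works; none of these is carried out.

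A further concern is that your proposed master inequality $g_{\max}(D)\ge 2\,g_{\min}(D)$ is itself at least as strong as the conjecture and is, as far as the paper's own data shows, open: Figure~\ref{gr} explicitly marks the lattice points strictly between the lines $b=a+1$ and $b=2a$ (and most points on $b=2a$) as undetermined, so the authors do not even know whether ranges such as $[g,g+1]$ with $g\ge 2$ or $[g,2g-1]$ are unrealizable. Reducing an open conjecture to a stronger open inequality is not progress unless the inequality is proved, and you correctly note it ``may well fail for large $n$,'' at which point you retreat to the weaker assertion $g_{\max}\ge g_{\min}+2$ when $g_{\min}\ge 2$ --- which is just a restatement of the conjecture via Proposition~\ref{lem-consec}. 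The sound parts of your write-up (the genus range is an interval of consecutive integers; a single band-end move changes $b$ by at most $2$ and hence $g$ by at most $1$; monotonicity of $g_{\min}$ and $g_{\max}$ under adding chords; the interlacement-rank description of the all-in genus) are all either in the paper or standard, but they only set the stage. As it stands, the proposal is a research plan, not a proof, and the statement remains a conjecture.
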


\begin{conjecture}
For any $n \neq 2$, there is a unique (up to equivalence) double-occurrence word $11 \cdots nn$ that 
corresponds to a chord diagram with the genus range $\{0,1\}$.
\end{conjecture}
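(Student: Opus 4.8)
The plan is to split the statement into two parts: (1) $\gr(W_n)=\{0,1\}$ for the word $W_n = 11\cdots nn$, and (2) for $n\ge 3$, every chord diagram $D$ with $n$ chords and $D\not\equiv W_n$ satisfies $2\in\gr(D)$, hence $\gr(D)\ne\{0,1\}$. The case $n=1$ is trivial, while $n=2$ is genuinely excluded, since there $1212$ also has genus range $\{0,1\}$.

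For (1): the all-in thickening $F_{W_n}$ is planar, so $0\in\gr(W_n)$ by Lemma~\ref{lem-euler}, and flipping a single band end to the outside decreases the number of boundary components by $2$, giving genus $1$. The real content is the bound $g(F)\le 1$ for every thickening $F$ of $W_n$. I would prove it by tracking the number of boundary components as the $n$ chord-bands are attached one at a time to the thickened backbone (an annulus, for which $b=2$): each such attachment changes $b$ by exactly $\pm 1$, and $g(F)$ equals the number of attachments that decrease $b$. Since every chord of $W_n$ is \emph{short} — its two endpoints are consecutive on the backbone, i.e. the short arc between them carries no other endpoint — one shows that at most one attachment can be of the decreasing type, whence $b\ge n$ and $g(F)\le 1$. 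Equivalently, this is a maximum-genus computation for the underlying graph of $W_n$ subject to the constraint that the backbone is embedded as a single circle.

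For (2): up to equivalence, $W_n$ is the \emph{only} chord diagram with no crossing pair and no nested pair of chords (for any chord, an endpoint lying on one of its two backbone arcs would belong to a chord that either crosses it or is nested inside it). Hence $D\not\equiv W_n$ has either two chords that cross, or two chords one strictly nested inside the other. In the crossing case, make the crossing pair all-in: in the resulting partial surface the inner-side boundary curve is disjoint from the outer boundary circle $O$ of the annulus (the all-in bands never touch $O$), and this configuration already has genus $1$. Since $n\ge 3$ there is a further chord; attach it as a one-in-one-out band with one foot on $O$ and the other on the inner-side curve. This joins two distinct boundary components, decreasing $b$ again, so the resulting thickening has genus $\ge 2$; passing to a thickening of genus exactly $2$ — or, appealing to the interval structure of genus ranges from Section~\ref{sec-prop}, simply noting $\max\gr(D)\ge 2$ — gives $2\in\gr(D)$. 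The nested case runs the same way: a pair with $i$ nested in $j$ becomes genus $1$ once $i$ is all-in and $j$ is one-in-one-out (a one-in/one-out band across a non-interleaved pair of endpoints creates a handle), and the third chord is then used to create a second handle.

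The step I expect to be the obstacle is the nested case. Unlike the crossing case, its non-planarity requires a mixture of band states, and the state one must assign to the ``third'' chord depends on how its endpoints sit relative to the nested pair $j\,i\,i\,j$ (disjoint from $j$'s span, nested inside it, or interleaved with $j$), so one is forced into a short but genuine case analysis — and this is exactly where the hypothesis $n\ne 2$ bites, since for $n=2$ the nested diagram $1221$ has genus range $\{0,1\}$ and no third chord is available. A convenient way to organize (2) for $n\ge 4$ is induction: delete a chord $c$ for which $D\setminus c\not\equiv W_{n-1}$, obtain a genus-$2$ thickening of $D\setminus c$ by the inductive hypothesis, and re-insert $c$ as an all-in band chosen so that $b$ increases by $1$ and the genus is unchanged; the few diagrams admitting no such $c$ fall directly into the crossing or nested case, and the base $n=3$ is a finite check over the chord diagrams on three chords.
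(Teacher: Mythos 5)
You are attempting a statement that the paper does not prove: it is stated as Conjecture~3.3 and is supported only by the computer calculations for $n\le 7$, so there is no paper proof to compare against and your argument must stand on its own. Part (1) of your plan is fine in outcome but its key step is only asserted: the bound $g(F)\le 1$ for \emph{every} thickening of $11\cdots nn$ is exactly the claim ``at most one attachment is of the decreasing type,'' which you do not prove; it can, however, be patched by citing Lemma~\ref{lem-range2}(1), whose connected-sum induction shows $\gr(U_n)=[0,1]$. Your crossing case in part (2) is correct: with the crossing pair all-in the outer circle $O$ is untouched while the two inner curves merge, and any third chord attached one-in, one-out joins $O$ to the inner curve; since adding further bands never lowers the genus, $\max\gr(D)\ge 2$ and $\gr(D)\neq\{0,1\}$.

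The genuine gap is the nested case, and it is not just an unfinished case analysis: the case hypothesis itself is too weak to carry the conclusion. A nested pair is present in $W_n$ read from another basepoint --- e.g.\ $122331\cdots$ is equivalent to $112233\cdots$ yet chord $2$ is nested in chord $1$ --- so no argument using only ``a nested pair plus one further chord'' can force $\gr(D)\neq\{0,1\}$. Concretely, in your configuration ($i$ all-in, $j$ one-in, one-out) the small curve $A$ created by $i$ is reachable only by a band end placed inside the inner arc of $i$; if the third chord has both endpoints outside that arc (the ``sibling'' and ``disjoint'' subcases), both of its feet land on the merged component containing $O$, the boundary count cannot drop again, and the genus stays at $1$ --- as it must, since for instance in $122331$ the diagram is $W_3$ and genus $2$ is impossible. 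What a correct argument needs is a global structural fact that you never isolate: if $D$ has no crossing pair and $D\not\equiv W_n$, then some chord $c$ carries endpoints of other chords on \emph{both} of its backbone arcs; choosing one chord from each arc (each lies entirely in one arc, there being no crossings) yields a sub-chord diagram equivalent to $123321$, and then Lemma~\ref{lem-range3words} gives at least three genus values, hence $\gr(D)\neq\{0,1\}$. With that replacement your outline closes the crossing-free case, but as written the decisive case is missing, and the concluding induction paragraph inherits the same problem since its exceptional diagrams are precisely these.
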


We note that there are two 2-letter words,  $1122$ and $1212$, and both corresponding chord diagrams 
have the genus range $\{ 0, 1\}$.

\begin{conjecture}
For any $n \neq 4$, there is a unique   (up to equivalence) chord diagram 
with genus range $\{1, 2\}$, and it is 
$(123123)(44 \cdots nn)$. 
\end{conjecture}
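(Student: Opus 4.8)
The plan is to prove two things separately: \emph{existence}, that $\gr(W_n)=\{1,2\}$ for $W_n=(123123)(44\cdots nn)$, and \emph{uniqueness}, that no other chord diagram on $n\ne 4$ chords has this genus range. For existence I would induct on $n$, with base case $n=3$. By Lemma~\ref{lem-euler}, every thickened diagram $F$ of $123123$ satisfies $g(F)=(5-b(F))/2$, so $\gr(123123)=\{1,2\}$ amounts to: the boundary-curve counts attainable over the $4^{3}$ band attachments are exactly $1$ and $3$. One exhibits, by tracing boundary curves, an attachment with a single boundary curve (genus $2$) and one with three (genus $1$), and then rules out a planar surface (five boundary curves) using the criterion for $0\in\gr(D)$ among the properties of genus ranges in Section~\ref{sec-prop}, applied to the three mutually interlaced chords. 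For the step $n\to n+1$, the word $W_{n+1}$ is obtained from $W_n$ by inserting the isolated chord $(n{+}1)(n{+}1)$; since inserting an isolated chord leaves the genus range unchanged (again a property from Section~\ref{sec-prop}), $\gr(W_{n+1})=\gr(W_n)=\{1,2\}$.

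For uniqueness, assume $\gr(D)=\{1,2\}$ with $D$ on $n\ne 4$ chords. The organising tool is a deletion-monotonicity statement: if $D'$ is obtained from $D$ by deleting one chord, then $\min\gr(D')\le\min\gr(D)$ and $\max\gr(D')\le\max\gr(D)$. This follows from Lemma~\ref{lem-euler}: deleting a band lowers the Euler characteristic by $1$ and changes the number of boundary curves by $\pm1$, so for \emph{every} fixed band attachment $F$ of $D$ the resulting attachment of $D'$ has genus $g(F)$ or $g(F)-1$, and every attachment of $D'$ arises this way. Using $\max\gr(D)=2$, no sub-diagram of $D$ has maximum genus $\ge 3$. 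On the other hand, by the characterization of diagrams with $0$ in their genus range (Section~\ref{sec-prop}), the hypothesis $0\notin\gr(D)$ forces $D$ to contain either three pairwise interlaced chords or an induced odd interlacement cycle on at least five chords; the latter kind of sub-diagram would have maximum genus $\ge 3$, so $D$ contains three pairwise interlaced chords $a,b,c$, that is, an $abcabc$ sub-pattern. Next one shows that every chord of $D$ other than $a,b,c$ is isolated: otherwise $D$ contains, on at most five chords, the triangle $\{a,b,c\}$ together with one extra interlacement or one nesting, and a finite inspection of these configurations shows that each has either $0$ in its genus range or maximum genus $\ge 3$ --- it is precisely here that $n=4$ is exceptional, since $12341234$ has genus range $\{1,2\}$. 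Finally, with all other chords isolated, the six endpoints of $a,b,c$ must be cyclically arranged so that all three pairs interlace, which up to the equivalence of double-occurrence words forces the pattern $abcabc$; moving the isolated chords into a single block (again by isolated-chord invariance) yields $D\sim(123123)(44\cdots nn)$.

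The step I expect to be the main obstacle is the case analysis inside the uniqueness argument: showing cleanly that a chord diagram on at least five chords with $0$ not in its genus range has maximum genus at least three, and then exhausting the finitely many ways a fourth or fifth non-isolated chord can be placed relative to the interlacement triangle, all while correctly isolating why $n=4$ --- and only $n=4$ --- escapes the classification. Each such configuration is a bounded computation, and the computer data of Section~\ref{sec-comp} confirms the conclusion for small $n$, but assembling them into a single $n$-uniform proof, with the exceptional case cleanly quarantined, is the crux.
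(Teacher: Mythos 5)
This statement is one of the paper's \emph{conjectures}: the authors give no proof, only computer verification for $n\le 7$, so there is no proof of theirs to compare yours against; what can be assessed is whether your outline would settle it, and it would not. The most immediate problem is that both ``properties from Section~\ref{sec-prop}'' you lean on are not in the paper, and one of them is false. There is no lemma saying that inserting an isolated chord leaves the genus range unchanged; on the contrary, Lemma~\ref{lem-repeatrepeat11} gives $\gr(G_m U_k)=[0,m+1]$ while $\gr(G_m)=[0,m]$, so appending the isolated chord $U_1$ can raise the maximum genus. The existence half of your argument survives only because of the end-edge hypotheses in Lemma~\ref{lem-connectsum}: $R_3$ and $U_1$ are of $A(\mathrm{max},1)$, which is exactly how Lemma~\ref{lem-range2}~(2) proves $\gr(R_3U_m)=[1,2]$; your induction must carry those conditions along, not invoke a blanket isolated-chord invariance. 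Likewise the paper contains no criterion for $0\in\gr(D)$; the bipartite-interlacement-graph characterization you implicitly use would itself have to be stated and proved (the paper excludes genus $0$ for $R_3$ by non-planarity of $K_{3,3}$, not by such a criterion).

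The uniqueness half is where the conjecture actually lives, and there the outline has gaps beyond the admittedly unfinished ``finite inspection.'' The claim that an induced odd interlacement cycle on at least five chords has maximum genus $\ge 3$ is asserted, not proved. More seriously, the final step fails: knowing that every chord outside the triangle $\{a,b,c\}$ interlaces nothing does not determine the diagram up to equivalence. Such chords may be nested in one another or placed in different gaps of the pattern $abcabc$; for instance $1231234455$ and $1234412355$ are inequivalent chord diagrams, each consisting of the interlacement triangle $\{1,2,3\}$ together with two chords that interlace nothing. ``Moving the isolated chords into a single block'' is not an equivalence of double-occurrence words and is not licensed by any lemma in the paper, so your argument would wrongly identify such diagrams with $(123123)(44\cdots nn)$; deciding which of these configurations actually have genus range $\{1,2\}$ (the computations for $n\le 7$ indicate only $(123123)(44\cdots nn)$ does) is precisely the content of the conjecture and is left untouched. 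As written, the proposal is a reasonable plan of attack on an open problem, not a proof.
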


There are several more chord diagrams for $n=4$ with genus range $\{1,2\}$.

\section{Properties of Genus Ranges}\label{sec-prop}

The following is standard for cellular embeddings of general graphs \cite{MoharThomassenBook}, and also 
known for 4-regular rigid vertex graphs \cite{BDJSV}. Below we state the property for chord diagrams. 

\begin{proposition}  \label{lem-consec}
The genus range of any chord diagram  consists of consecutive integers.
\end{proposition}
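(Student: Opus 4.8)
The plan is to show that the set of boundary-component numbers realized by the various thickened diagrams of a fixed chord diagram $D$ is, after the parity constraint of Lemma~\ref{lem-euler} is taken into account, ``connected'' in the sense that one can always find a sequence of band-attachment configurations whose genus changes by exactly $1$ at each step. Concretely, I would order the $4^n$ configurations of $D$ and argue that changing a single band-end attachment from the inner to the outer boundary circle of the annulus (a single ``local move'') changes the number of boundary components $b(F)$ by at most $2$, hence changes the genus $g(F)=(1/2)(n-b(F)+2)$ by at most $1$. Since any two configurations are joined by a finite sequence of such local moves, the intermediate value of the genus is attained along the way, so $\gr(D)$ has no gaps.

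The key steps, in order: (i) Fix $D$ and two configurations $F_0, F_1$ of $D$ with $g(F_0) < g(F_1)$; we may assume $F_0, F_1$ differ in the attachment of a single band end by interpolating through a path of configurations $F_0 = G_0, G_1, \ldots, G_k = F_1$ in which consecutive terms differ in exactly one band end. (ii) Analyze the effect on the boundary of flipping one band end of a chord $c$ from inner to outer: in a neighbourhood of that endpoint the two boundary curve segments of the annulus that run past the foot of $c$ get reconnected to the two boundary segments along the band $c$ in the opposite pairing, which is precisely a ``smoothing change'' on a $4$-valent vertex of the boundary-curve system, and such a change either splits one boundary component into two or merges two into one, i.e.\ $b$ changes by $\pm 1$; but since $b(F)$ and $n$ have the same parity and $n$ is unchanged, and flipping a single band end does not change $n$, the parity of $b$ is preserved, so in fact the change is by $0$ or $\pm 2$, giving $|g(G_{i+1}) - g(G_i)| \le 1$. (iii) Conclude that along the path $G_0, \ldots, G_k$ the genus takes every integer value between $g(F_0)$ and $g(F_1)$, so $\gr(D)$ is an interval of integers.

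The main obstacle is step (ii): making precise and correct the claim that flipping one band end from the inner to the outer boundary circle is exactly a local reconnection of the boundary curves that alters $b$ by at most one (hence, with parity, by $0$ or $2$). One must set up a clean combinatorial model of the boundary of a thickened diagram --- for instance, tracking the boundary as a permutation on the set of ``sides'' of the $3n$ edges, where each vertex of $D$ (a chord endpoint) contributes a local gluing pattern determined by the inner/outer choice --- and check that changing one endpoint's choice modifies this permutation by multiplication with a single transposition. Once that is in place, the standard fact that multiplying a permutation by a transposition changes its number of cycles by exactly $1$ finishes it, and the parity bookkeeping upgrades ``$\pm 1$ in $b$'' to ``$0$ or $\pm 2$ in $b$'' so that the genus moves by at most $1$. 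I would also remark that this is the chord-diagram specialization of the general cellular-embedding statement cited from \cite{MoharThomassenBook} and \cite{BDJSV}, so the argument is essentially a transcription of the known one into the present band-attachment language.
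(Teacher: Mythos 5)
Your overall strategy --- interpolate between any two of the $4^n$ band configurations by flipping one band end at a time, show each flip changes $b$ by at most $2$ and hence $g$ by at most $1$, and conclude by a discrete intermediate-value argument --- is sound; note that the paper gives no proof of this proposition at all (it cites it as standard, from \cite{MoharThomassenBook} and \cite{BDJSV}), so you are supplying the missing argument rather than paraphrasing one. However, your central step (ii) is internally inconsistent as written, and the verification you propose would fail. You model the flip of a single band end as multiplying the boundary permutation by a single transposition, which would change the number of boundary components by exactly $1$; but you also (correctly) observe that $b(F)\equiv n \pmod 2$ is forced by integrality of the genus in Lemma~\ref{lem-euler}, and since the flip does not change $n$, the change in $b$ must be even. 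Both cannot hold: if your proposed check ``one endpoint's choice changes the permutation by a single transposition'' succeeded, the parity constraint would be violated, so that check cannot succeed, and the sentence in which ``parity bookkeeping upgrades $\pm 1$ to $0$ or $\pm 2$'' covers a contradiction rather than resolving it.

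The correct local analysis must involve \emph{both} boundary circles of the annulus at the flipped endpoint. Before the flip, the inner boundary arc at that endpoint is cut and joined to the two sides of the band while the outer arc passes through undisturbed; after the flip the roles of inner and outer are exchanged. So three joins among six boundary strand-ends are replaced by three different joins --- a product of two transpositions, not one. Equivalently, and this is the cleanest repair (it is exactly the mechanism the paper itself uses in the proof of Lemma~\ref{lem-nosingleton}), factor the flip into two steps: first remove the band, which either splits the boundary component tracing it or merges the two components tracing it, changing $b$ by exactly $\pm 1$; then re-attach the band at the other circle, again changing $b$ by exactly $\pm 1$. The net change in $b$ is therefore $0$ or $\pm 2$, hence $|\Delta g|\le 1$ per flip, and your steps (i) and (iii) then complete the proof as you intended.
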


By Proposition~\ref{lem-consec} the genus ranges of chord diagrams are integer intervals, therefore in 
the rest of the paper we use the notation $[a, b] = \{ k \in \Z \mid a \leq k \leq b\}.$  

\begin{lemma}      \label{lem-nosingleton}
There does not exist a chord diagram whose genus range consists only of a singleton.
\end{lemma}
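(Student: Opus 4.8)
The plan is to produce, for an arbitrary chord diagram $D$ with $n\ge 1$ chords, two thickened diagrams of $D$ whose genera differ by $1$; this already shows $\gr(D)$ is not a singleton. The mechanism is to delete one chord, thicken what remains in the all-in way, and then re-attach the deleted chord's band in two different ways that provably change the genus by different amounts.

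In detail: pick a chord $c$ of $D$ with endpoints $p,q$ on the backbone, and let $F'=F_{D'}$ be the all-in thickened diagram of $D'=D\setminus\{c\}$ (if $n=1$ this is simply an annulus). The crucial point is that in an all-in thickening every band end sits on the inner boundary circle of the backbone annulus, so its outer boundary circle $O$ carries no band ends; hence $O$ is, by itself, one full boundary component of $F'$, and no boundary component of $F'$ meets both the inner and the outer circle. In particular the point of the inner circle over $\theta_p$ lies on some boundary component $B_p$ of $F'$ with $B_p\ne O$. Now build two thickened diagrams of $D$ by attaching the band of $c$ to $F'$: let $F_1$ attach both ends of this band to $O$, and let $F_2$ attach one end to $O$ (at $q$) and the other end to $B_p$ (at $p$). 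These realise, respectively, the ``one-out, one-out'' and the ``one-in, one-out'' choice for $c$ over the all-in $D'$, so both are legitimate thickened diagrams of $D$.

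Finally, apply the standard band-attachment dichotomy together with Lemma~\ref{lem-euler}: attaching a $1$-handle to a compact orientable surface drops $\chi$ by $1$, so from $\chi=2-2g-b$ (the result again being orientable by our standing convention) the genus is unchanged if the two attaching arcs lie on the same boundary component and goes up by $1$ if they lie on distinct ones. Thus $g(F_1)=g(F')$ (both arcs on $O$) while $g(F_2)=g(F')+1$ (arcs on the distinct components $B_p\ne O$), so $\{\,g(F'),\,g(F')+1\,\}\subseteq\gr(D)$.

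The argument is short and I do not expect a real obstacle; the items needing a sentence of justification are that $F_1,F_2$ are genuinely valid (connected, orientable) thickened diagrams of $D$, so that Lemma~\ref{lem-euler} and the dichotomy apply, and that the degenerate case $n=1$ (where $D'$ has no chords) is handled by the same computation, with $F'$ the annulus, $B_p=I\ne O$, $g(F_1)=0$, $g(F_2)=1$. The one substantive idea is the observation that the all-in thickening leaves the outer circle entirely free, which is exactly what makes both a ``same-component'' and a ``different-component'' re-attachment of the deleted chord available.
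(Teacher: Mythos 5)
Your proof is correct and follows essentially the same route as the paper: delete one chord, thicken the remaining diagram all-in, and exploit the untouched outer boundary circle to re-attach the deleted band in two ways whose genera differ by one, via the Euler characteristic formula. The only difference is organizational---by comparing the all-out and the one-in, one-out re-attachments directly against $F'$ (whose outer circle is a free boundary component), you avoid the paper's two-case analysis of whether removing the band of $c$ from $F_D$ raises or lowers the number of boundary components, which is a mild streamlining of the same idea.
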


\begin{proof}
Since all-in thickened diagram $F_D$ for a chord diagram $D$ has an outside boundary component and 
some inside ones, any chord diagram has a thickened chord diagram with more than one boundary component. 
Let $n$ be the number of boundary components of $F_D$, one of which is the outside circle.

Let $c$ be a chord in $D$. Then removing the corresponding band (thickened chord) from $F_D$ either 
increases or decreases the number of boundary components of a thickened diagram by exactly one. 
If  $c$ is traced by a single boundary component, then its band removal splits the component in two parts, and 
if $c$ is traced by two components, then the removal of its band connects the two components 
as 
a single one. Suppose that when a band for $c$  is removed from $F_D$, the number of boundary components 
increases by one. Let $D'$ be the chord diagram with $c$ removed from $D$, and consider $F'=F_{D'}$, the 
all-in thickened diagram of $D'$. Then the number of boundary components of $F'$ is $n+1$. In this case, 
adding the band of $c$ back to $F'$  to obtain $F_D$ will connect two inside boundary components of $F'$. 
Instead, connecting both ends of the band of $c$ to the outside boundary circle of $F'$  increases the number of 
boundary components by one, and gives rise to a thickened diagram of $D$ with $n+2$
boundary components, and with genus $g(F_D)+1$. Hence, $\gr(D)$ is not a singleton. 

We repeat a similar argument for the case when the number of boundary components of $F_D$ decreases by 
one when a band of $c$ is removed. Let $D'$ be the chord diagram with $c$ removed from $D$ and $F'=F_{D"}$ 
be the all-in thickened diagram of $D'$, then the number of boundary components of $F'$ is $n-1$. Adding  to 
$F'$ a one-in, one-out chord for $c$ connects the original inside boundary  with the outside curve and 
decreases the number of boundary components by one. This gives rise to a thickened diagram of $D$ with 
$n-2$ boundary components with genus $g(F_D)-1$.
\end{proof}

\begin{figure}[h]
  \centering
  \includegraphics[width=10cm]{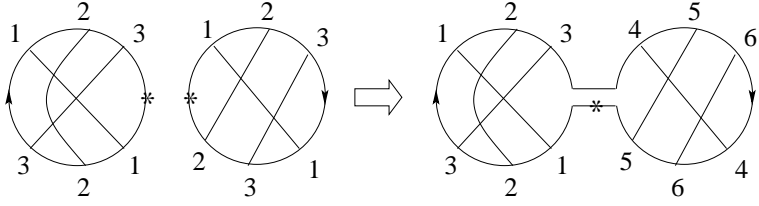}

  \caption{Connected sum of two chord diagrams.}
  \label{sum}
\end{figure} 

The connected sum of two chord diagrams with base points is defined in a manner similar to the connected sum 
of knots, see Fig.~\ref{sum}. A band is attached at the base points preserving orientations to obtain a new 
chord diagram. In the figure, the left and right chord diagrams, respectively, before taking connected sum are 
represented by double-occurrence words $W_1=123123$ and $W_2=123132$, respectively, and after the 
connected sum, it is represented by $W=123123456465$, after renaming $W_2$. We use the notation 
$W=W_1 W_2$ to represent the word thus obtained, by renaming and concatenation. 

\begin{lemma}\label{lem-connectsum}
Let $W_1$ and $W_2$ be chord diagrams such that the genus ranges of corresponding chord diagrams are 
$[g_1, g_1']$ and $[g_2, g_2']$, respectively. Let $e_1,e_2$ be the end edges of $W_1$, $W_2$, respectively. 
Then the genus range of the chord diagram corresponding to $W=W_1 W_2$ is 
$[g_1 + g_2 - \epsilon, g_1' + g_2' - \epsilon ']$
for some $\epsilon, \epsilon ' \in \{ 0, 1\}$, where $\epsilon, \epsilon'$ are determined as follows.

\begin{description}
\item [($E_0$)] $\epsilon=0$ if and only if at least one of the end edges (say, the end edge $e_1$  of $W_1$) has the 
following property:  any thickened graph of genus $g_1$ traces $e_1$ by two boundary curves.

\item[($E_1$)] $\epsilon=1$ if and only if  both end edges  $e_1$ and $e_2$ have the following property:  there exist 
thickened graphs of genus $g_1$ and $g_2$, respectively, that trace both $e_1$ and $e_2$ by a single boundary 
curve.

\item[($E'_0$)] $\epsilon '=0$  if and only if at least  one of the end edges (say, $e_1$ of $W_1$) has the following 
property:  there exists a thickened graph of genus $g_1'$ that traces $e_1$ by two boundary curves.

\item[($E'_1$)] $\epsilon '=1$ if and only if both end edges $e_1$ and $e_2$ of $W_1$ and $W_2$ have the 
following property:  any thickened graphs of genus $g_1'$ and $g_2'$, respectively, trace $e$ and $e'$ by a single  
boundary curve.
\end{description}

\begin{figure}[h]
  \centering
  \includegraphics[width=15cm]{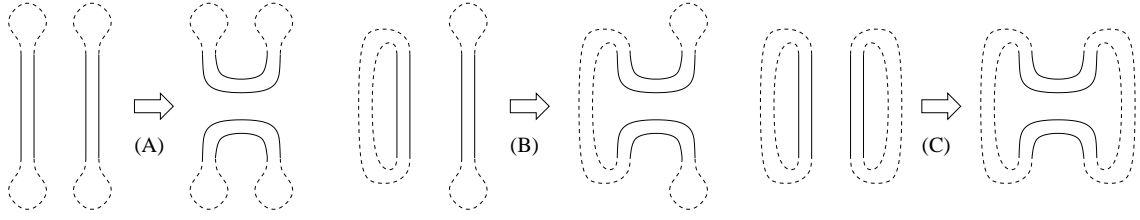}

  \caption{Connected sum and boundary curves: (A) both end edges are traced by a single boundary curve; 
    (B) one end edge is traced by a single boundary curve but the other is traced by two boundary curves; 
    (C) both end edges are traced by two boundary curves.} 
  \label{connect}
\end{figure} 

\end{lemma}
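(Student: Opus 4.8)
The plan is to analyze how the boundary curves of a thickened diagram of $W = W_1 W_2$ decompose in terms of the thickened diagrams of the two summands, and then use Lemma~\ref{lem-euler} to translate boundary-component counts into genus. The key observation is that a thickened diagram $F$ of $W$ is obtained by taking a thickened diagram $F_1$ of $W_1$ and a thickened diagram $F_2$ of $W_2$, cutting each along a short arc of its end edge (the portion near the base point), and gluing the two along a connecting band, exactly as in Fig.~\ref{sum} and Fig.~\ref{connect}. Every band-attachment choice for the chords of $W$ corresponds to an independent band-attachment choice in $F_1$ and in $F_2$; the connecting band itself is attached in an orientation-preserving way and introduces no new freedom. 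So the set of thickened diagrams of $W$ is in bijection with pairs $(F_1, F_2)$.

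First I would establish the boundary-count formula for the connected sum. Let $b(F_i)$ be the number of boundary components of $F_i$. The connecting band is spliced into the two end edges $e_1, e_2$. If $e_1$ is traced by two distinct boundary curves of $F_1$ and $e_2$ is traced by two distinct boundary curves of $F_2$, then splicing in the connecting band merges pairs of these curves and one computes $b(F) = b(F_1) + b(F_2) - 2$; if exactly one end edge (say $e_1$) is traced by a single boundary curve, then $b(F) = b(F_1) + b(F_2) - 1$; and if both end edges are traced by single boundary curves, then $b(F) = b(F_1) + b(F_2)$. (These are precisely the three cases (A), (B), (C) of Fig.~\ref{connect}; each is a routine local surgery count on how a band splicing affects the curves running along the two arcs it connects.) Combining with Lemma~\ref{lem-euler}, writing $n = n_1 + n_2$ for the chord counts, one gets
$$
g(F) \;=\; g(F_1) + g(F_2) + \delta,
$$
where $\delta = 0$ in case (C), $\delta = -1$ in case (B), and $\delta = -2$ in case (A). Thus $g(F) = g(F_1) + g(F_2) - t$, where $t \in \{0,1,2\}$ is the number of end edges (among $e_1, e_2$) that are traced by a single boundary curve in the chosen $F_1, F_2$.

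Next I would extract the minimum and maximum of $\gr(W)$. For the minimum: we want $g(F_1) + g(F_2) - t$ as small as possible. Taking $F_1, F_2$ of minimal genera $g_1, g_2$, the minimum of $\gr(W)$ is $g_1 + g_2 - (\text{max value of } t \text{ achievable over minimal-genus } F_1, F_2)$. This max value is $2$ exactly when there exist genus-$g_1$ and genus-$g_2$ thickened diagrams tracing $e_1$ and $e_2$ respectively by single curves — condition $(E_1)$ — giving $\epsilon = 1$; otherwise, whenever we pick minimal-genus diagrams at least one of the two end edges is forced to be traced by two curves (this is condition $(E_0)$, which I should check is the exact negation of $(E_1)$ within the minimal-genus stratum), so $t \le 1$ there; but then we could also take one summand non-minimal to gain a $t$ at the cost of a $+1$ in genus, which is a wash, so the minimum is $g_1 + g_2 - \epsilon$ with $\epsilon$ as stated. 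The subtle point here, and the main obstacle, is verifying that no clever non-minimal choice of $(F_1,F_2)$ beats $g_1+g_2-\epsilon$: one must argue that raising the genus of a summand by $k$ can increase $t$ by at most... well, $t$ is bounded by $2$ regardless, so $g(F_1)+g(F_2) - t \ge g_1 + g_2 - 2$ always, and the question is only whether $g_1+g_2-2$ versus $g_1+g_2-1$ is attained, which is exactly the $(E_0)/(E_1)$ dichotomy. For the maximum: dually, take $F_1, F_2$ of maximal genera $g_1', g_2'$, and now we want $t$ as small as possible; $t = 0$ is achievable at the top precisely when some maximal-genus diagram traces $e_1$ by two curves (condition $(E'_0)$), giving $\epsilon' = 0$; and $t \ge 1$ is forced at the top exactly when every maximal-genus diagram of each summand traces its end edge by a single curve (condition $(E'_1)$), giving $\epsilon' = 1$. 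Again one must confirm that dropping to a non-maximal summand cannot help: lowering $g(F_1)$ by $1$ lowers the total by $1$ but can lower $t$ by at most $1$, so it is at best a wash — hence the maximum is $g_1' + g_2' - \epsilon'$. Finally, since $\gr(W_1), \gr(W_2)$ are intervals (Proposition~\ref{lem-consec}) and genera realized in between interpolate, $\gr(W)$ is the full interval between these extremes, completing the proof.
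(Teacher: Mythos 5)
Your overall strategy (analyze how the splice rewires boundary curves, then convert to genus via Lemma~\ref{lem-euler} and extract the min and max) is the same as the paper's, but your central bookkeeping step is wrong, and the error propagates. You claim $b(F)=b(F_1)+b(F_2)-1$ when exactly one end edge is singly traced, and correspondingly $g(F)=g(F_1)+g(F_2)-t$ where $t\in\{0,1,2\}$ counts the singly traced end edges. The middle case is already impossible on parity grounds: $b(F_i)\equiv n_i \pmod 2$ and $b(F)\equiv n_1+n_2 \pmod 2$, so $b(F)-b(F_1)-b(F_2)$ must be even (your own formula would give the half-integer genus $g(F_1)+g(F_2)-\tfrac12$). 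The correct counts, as in the paper, are: $b(F)=b(F_1)+b(F_2)$ when \emph{both} end edges are singly traced (case (A)), and $b(F)=b(F_1)+b(F_2)-2$ when at least one is doubly traced (cases (B) and (C)); hence $g(F)=g(F_1)+g(F_2)-1$ in case (A) and $g(F)=g(F_1)+g(F_2)$ otherwise --- the deficit is at most $1$, never $2$. A concrete check: take $W_1=W_2=11$ and thicken each with its chord one-in, one-out, so $b(F_i)=1$, $g(F_i)=1$, and each end edge is traced by the single boundary curve; the resulting thickened diagram of $1122$ has $b=2$ and genus $1=1+1-1$, not $0=1+1-2$ as your formula predicts (and indeed $\gr(1122)=[0,1]$, so genus $2-\epsilon$ with $\epsilon=2$ never occurs).

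Because of this, your extraction of the extremes does not go through as written: with $g=g(F_1)+g(F_2)-t$ and $t=2$ attainable at minimal genus you would conclude a minimum of $g_1+g_2-2$, which contradicts the very statement you are proving (your text silently switches from $t=2$ to $\epsilon=1$ at that point). With the corrected relation the analysis becomes clean and your remaining concerns disappear: since $g(F)\geq g(F_1)+g(F_2)-1\geq g_1+g_2-1$, no non-minimal choice of summands can help, and the minimum is $g_1+g_2-1$ exactly when both summands admit minimal-genus thickenings tracing their end edges singly (condition $(E_1)$, whose negation is indeed $(E_0)$), and $g_1+g_2$ otherwise; dually the maximum is $g_1'+g_2'$ exactly when some maximal-genus thickening of one summand traces its end edge doubly ($(E_0')$), and $g_1'+g_2'-1$ otherwise. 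So the fix is localized to the boundary-curve count at the splice, but as submitted the proof's key formula is false and must be repaired before the rest can stand.
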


\begin{proof}
This is proved by a case-by-case analysis of the number of boundary components and by using 
Lemma~\ref{lem-euler}. A similar argument is  found in  \cite{BDJSV,Jonoska2002}. Let $n_1, n_2$ be the 
number of chords of  chord diagrams corresponding to $W_1$, $W_2$, respectively. The number of chords 
of $W=W_1 W_2$ is $n=n_1+ n_2$. Let $b_1$ and $b_2$ be the number of boundary components of 
thickened chord diagrams for $W_1$ and $W_2$, respectively. The number of boundary component $b$ of a 
thickened diagram $D$ of $W=W_1 W_2$ after taking the connected sum equals $b_1+b_2-\alpha$ where 
$\alpha=0$ or $\alpha =2$. If both end edges $e_1$ and $e_2$   are traced by a single component (the 
situation as in Fig.~\ref{connect} (A)) then $\alpha=0$. If at least one end edge $e_1$ or $e_2$ is traced 
by two components (the situations Fig~\ref{connect} (B) and (C)), then $\alpha = 2$. Then 
Lemma~\ref{lem-euler} implies
\begin{align*}    
g 	&= \frac{1}{2} \left( n - b + 2 \right) = \frac{1}{2} \left[ (n_1 + n_2) - (b_1 + b_2 - \alpha) + 2 \right] \\
 	&= \frac{1}{2} \left(n_1 - b_1 + 2 \right) + \frac{1}{2} \left( n_2 - b_2 + 2 \right) + \frac{\alpha}{2} - 1 \\
 	&= g_1 + g_2  + \frac{\alpha}{2} - 1, 
\end{align*}
where $g_1$, $g_2$, and $g$ are genera of $W_1$, $W_2$, and $W$, respectively. 

For statement  $(E_1)$, there are thickened diagrams with minimal genus of $W_1$ and $W_2$ for which 
$\alpha = 0$ and whose connected sum preserves the number of boundary components (Fig.~\ref{connect} (A)), 
hence the statement follows. The other cases are proved by similar arguments. 
\end{proof}
 
Since we often refer to the number of boundary components tracing the end edge, we define the following 
notation. Let $e$ be the end edge of a chord diagram corresponding to a double-occurrence word $W$.
Let $c$ be 1 or 2. We say that $W$ satisfies the condition $A({\rm min}, c)$ (resp. $A({\rm max}, c)$) 
if any thickened diagram of minimum (resp. maximum) genus $g$  traces $e$ by a single boundary curve for 
$c=1$, and by two boundary curves for $c=2$. Similarly, we say that $W$ satisfies the condition 
$E({\rm min}, c)$ (resp. $E({\rm max}, c)$) if there exists a thickened diagram of minimum (resp. maximum) 
genus $g$ that  traces $e$ by a single boundary curve for $c=1$, and by two boundary curves for $c=2$. 
We also simply say $W$ is (of) $A({\rm min}, c)$ etc. Then Lemma~\ref{lem-connectsum} is summarized as follows.

\begin{table}[h]
\centering

  \begin{tabular}{clc} 
  \hline
    Cases & \quad $W_1$,  $W_2$ & $W_1 W_2$ \\
  \hline
    $(E_0)$ 	& one $A({\rm min}, 2)$ 	& $ \epsilon = 0 $  \\
    $(E_1)$ 	& both $E({\rm min}, 1)$	& $ \epsilon = 1 $  \\
    $(E_0')$ 	& one $E({\rm max}, 2)$ 	& $ \epsilon ' = 0 $ \\
    $(E_1')$ 	& both $A({\rm max}, 1)$ 	& $ \epsilon ' = 1 $ \\
  \hline
  \end{tabular}
\end{table}

If a chord diagram $D'$ is obtained from $D$ by removing some chords, then $D'$ is called a {\it sub-chord diagram} 
of $D$. The following lemma covers a large family of chord diagrams that support Conjecture~\ref{conj-range2}.

\begin{lemma}      \label{lem-range3words}
If  a  chord diagram  has a sub-chord diagram corresponding to  the double-occurrence word $123321$, then 
its genus range contains more than $2$ integers.
\end{lemma}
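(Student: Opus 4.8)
The plan is to analyze the sub-chord diagram $W_0 = 123321$ directly, compute its genus range together with the boundary-curve behavior of its end edge, and then invoke Lemma~\ref{lem-connectsum} to propagate the conclusion to any chord diagram containing $W_0$ as a sub-chord diagram. First I would observe that removing chords can only change the genus range in a controlled way: writing an arbitrary chord diagram $D$ containing $W_0$ as (up to equivalence) a connected sum or iterated connected sum involving $W_0$ and the remaining chords is not literally forced by ``sub-chord diagram,'' so the first technical step is to reduce to the connected-sum situation. Concretely, if $D$ has $W_0$ as a sub-chord diagram, then by cyclically rotating the base point and possibly reversing, one can arrange that the six endpoints of the three distinguished chords appear, in backbone order, as a contiguous block $123321$ interleaved with the other endpoints only ``outside'' that block — if they genuinely interleave, one first argues (as is standard for these nesting-pattern lemmas) that it suffices to treat the interleaving chords as part of the second summand and use the monotonicity of genus range under adding chords. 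I expect this reduction to be the main obstacle: making precise that ``contains $123321$ as a sub-chord diagram'' lets us split $D = W_0 \cdot W'$ for some $W'$, or otherwise directly bound $\gr(D)$ from below using Lemma~\ref{lem-connectsum}'s interval formula.

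Granting that reduction, the core computation is to determine $\gr(123321)$ and its end-edge data. The word $123321$ is three nested chords, i.e.\ the all-in thickened diagram $F_{W_0}$ is planar, so $g=0$ is attained and its end edge is traced by two boundary curves (the outer circle and the innermost nested region both meet the end edge); thus $W_0$ is of $A(\mathrm{min},2)$, or at least $E(\mathrm{min},2)$. At the other extreme, by flipping band attachments — e.g.\ making all three chords one-in, one-out, or suitable combinations — one checks via Lemma~\ref{lem-euler} that the number of boundary components can be driven down to $1$ (a single boundary curve), giving genus $2$ since $n=3$ forces $g=(1/2)(3-1+2)=2$. Between these, Proposition~\ref{lem-consec} guarantees consecutiveness, so $\gr(123321)=[0,2]=\{0,1,2\}$, which already has three integers; so in fact the lemma holds on the nose for $W_0$ itself, and the content is the propagation. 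For the general $D = W_0 \cdot W'$, Lemma~\ref{lem-connectsum} gives $\gr(D) = [\,0 + g_2 - \epsilon,\; 2 + g_2' - \epsilon'\,]$ where $[g_2,g_2'] = \gr(W')$ and $\epsilon,\epsilon' \in \{0,1\}$; since $\epsilon - \epsilon' \le 1$, the length of this interval is at least $(2 + g_2') - (g_2) - 1 \ge 2 - (g_2' - g_2) \cdots$ — wait, more carefully, length $= (2 + g_2' - \epsilon') - (g_2 - \epsilon) = 2 + (g_2' - g_2) + (\epsilon - \epsilon') \ge 2 + 0 + (0 - 1) = 1$, which is not yet enough, so one must additionally use that $W_0$ is $A(\mathrm{min},2)$ (not merely $E(\mathrm{min},2)$): this forces $\epsilon = 0$ by case $(E_0)$, whence the interval length is $2 + (g_2' - g_2) - \epsilon' \ge 2 - 1 = 1$ — still not enough, so the genuinely careful version needs the end edge of $W_0$ to also witness $E(\mathrm{max},2)$, forcing $\epsilon' = 0$ and length $\ge 2$, i.e.\ at least three integers.

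Therefore the clean line of attack is: (i) show the end edge of $123321$ is traced by two boundary curves both in a minimum-genus thickening and in a maximum-genus thickening — i.e.\ $W_0$ satisfies $A(\mathrm{min},2)$ and $E(\mathrm{max},2)$ — by exhibiting the relevant band configurations explicitly and counting boundary components with Lemma~\ref{lem-euler}; (ii) conclude $\epsilon = \epsilon' = 0$ in the connected-sum formula; (iii) deduce $\gr(W_0 \cdot W') = [g_2, 2 + g_2']$, an interval of length $2 + (g_2' - g_2) \ge 2$, hence containing at least three integers; and (iv) dispatch the reduction from ``sub-chord diagram'' to ``connected summand'' using monotonicity of the genus-range interval under adjoining chords, which is exactly the kind of argument already cited from \cite{BDJSV,Jonoska2002}. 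The one place I'd be most careful is step (i): verifying that a maximum-genus thickening of the isolated $123321$ can be chosen to still trace its end edge by two curves, since maximizing genus tends to merge boundary components; the resolution is that one has freedom in the band attachments of the three chords that does not involve the two end-edge boundary segments, so the end-edge tracing can be kept at two while the interior genus is maximized.
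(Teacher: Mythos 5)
There are two genuine gaps, and the first one is fatal to the strategy. Your whole plan hinges on reducing ``$D$ contains $123321$ as a sub-chord diagram'' to ``$D = W_0\, W'$ is a connected sum with $W_0=123321$,'' but a sub-chord diagram only means that deleting some chords of $D$ leaves the nested pattern $123321$; the deleted chords may have endpoints interleaved arbitrarily among the six endpoints of chords $1,2,3$, in which case $D$ is not a connected sum with $123321$ as a summand and Lemma~\ref{lem-connectsum} simply does not apply. The ``monotonicity of the genus-range interval under adjoining chords'' that you invoke to dispose of interleaving is not proved anywhere in the paper (nor is it obvious: adding a chord changes $n$ and $b$ simultaneously, and both endpoints of the interval can move), and it is exactly the content that a proof must supply. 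The paper's own argument sidesteps this entirely: in the full diagram $D$ it takes chords $1,2,3$ all-in and \emph{every other chord all-out}, so the inside boundary structure is exactly that of the nested triple ($4$ inside curves) while the outside contributes at least one more curve, giving $b\ge 5$; then switching chord $1$, and afterwards chord $3$, to one-in, one-out each drops $b$ by exactly $2$, producing three thickenings of the same $D$ with three consecutive genera, with no hypothesis on how the remaining chords interleave.

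The second gap is your step (i): $123321$ does \emph{not} satisfy $E(\mathrm{max},2)$. Its maximum genus is $2$, and by Lemma~\ref{lem-euler} a genus-$2$ thickening of a $3$-chord diagram has $b = 3+2-4 = 1$, a single boundary component; then every edge, in particular the end edge, is traced by a single curve, so your proposed ``keep the end-edge tracing at two while the interior genus is maximized'' is impossible and $\epsilon'$ cannot be forced to $0$ this way. (Within the connected-sum case this is repairable without any end-edge analysis of the maximum: Lemma~\ref{lem-nosingleton} gives $g_2'\ge g_2+1$ for a nonempty $W'$, so the interval $[g_2-\epsilon,\,2+g_2'-\epsilon']$ already has length $\ge 2+(g_2'-g_2)+(\epsilon-\epsilon')\ge 2$.) But even with that fix, the reduction in the first paragraph is missing, so the proposal does not prove the lemma as stated.
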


\begin{proof}
Consider the surface $F$ obtained by thickening the chord diagram such that the three parallel chords represented 
by 1, 2 and 3 are all-in, and the other chords are all-out. Then it has $4$ inside boundary curves and at least one 
outside, total at least $5$. Refer to Fig.~\ref{parallel3}, where other chords are not depicted. Move one end of  
chord $1$ from inside to outside, keeping the other inside. Then the total number of boundary curves decreases 
by $2$. This is seen as follows. Regard this operation in two steps: (1) remove a band corresponding to chord 
$1$ from $F$, and (2) add a band corresponding to $1$ with one-in and one-out ends. The step (1) joins the two 
inside boundary curves to a single curve, thus reduces the number of boundary curves by 1. In step (2), the new 
one-in, one-out band joins the newly formed inside curve with one of the outside curves, reducing the boundary 
curve by 1 again. Hence  replacing an all-in chord $1$ in $F$ with one-in, one-out chord decreases the number of 
boundary curves by 2. Performing the same procedure for the chord labeled $3$, further decreases the number 
of components by $2$. Therefore, the genus range consists of at least $3$ numbers.
\end{proof}

\begin{figure}[h]
  \centering
  \includegraphics[width=7cm]{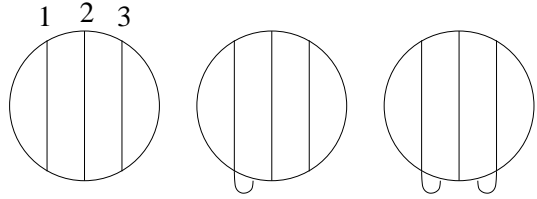} 

  \caption{Three thickened surfaces with distinct genera for a chord diagram containing the sub-chord diagram 
    $123321$. All other chords (not pictured) are all-out.}  
  \label{parallel3}
\end{figure}

\section{Realizations of Genus Ranges}\label{sec-realize}
 
We use the following notations for respective double-occurrence words and corresponding chord diagrams: 
\begin{align*}      
U_n 	& = 1122\cdots nn \quad (\; =\ {U_1}^n\; ) , \\
R_n 	& = 12\cdots n 12 \cdots n , \\
G_\ell &= (1212)(3434) \cdots ((2\ell -1)2 \ell (2\ell -1) 2 \ell) \quad (\; =\ {R_2}^{\ell}\; ) .
\end{align*}

\begin{lemma}      \label{lem-repeatword}
For   the chord diagram corresponding to $R_n=12\cdots n 12 \cdots n$, where $n=2m$ or $n=2m-1$ and 
$n>2$, we have  $\gr(R_n)=[1, m]$. 
\end{lemma}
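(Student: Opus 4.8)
The plan is to compute the genus range of $R_n$ by a direct analysis of how many boundary components each thickened diagram can have, and then invoke Lemma~\ref{lem-euler}. The word $R_n = 12\cdots n\,12\cdots n$ has the property that the two occurrences of every letter are ``maximally interleaved'': any two chords cross. First I would establish the lower endpoint. Since $R_n$ has $n$ chords and is not of the form $U_n$ (for $n>2$ there is genuine interleaving), I expect $0 \notin \gr(R_n)$; concretely, I would show that no thickened diagram of $R_n$ is planar, equivalently that the maximum number of boundary components $b$ over all thickened diagrams is at most $n$, giving $g \geq (1/2)(n - n + 2) = 1$. To see that genus $1$ is actually attained, I would exhibit an explicit band assignment — the all-in diagram $F_{R_n}$ is a natural candidate — and count its boundary curves directly; tracing the boundary of $F_{R_n}$ for the fully interleaved word should yield exactly $n$ components when $n$ is even and $n$ components when $n = 2m-1$ as well (the parity of $b$ must match that of $n$), so $g(F_{R_n}) = 1$ in both cases.

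For the upper endpoint, the claim is that the maximum genus is $m = \lceil n/2 \rceil$, i.e.\ the minimum number of boundary components over all thickened diagrams of $R_n$ is $n - 2m + 2$, which equals $2$ when $n = 2m$ and $1$ when $n = 2m-1$. The main work is constructing a band assignment achieving this minimum. The strategy I would use is to process the chords in pairs: choosing a clever in/out pattern on a pair of mutually-crossing chords should drop the boundary count by $2$ relative to the all-in configuration, mirroring the computation in the proof of Lemma~\ref{lem-range3words} (remove a band to merge two boundary curves, then re-add it as a one-in/one-out chord to merge again, losing $2$ curves per step). Pairing up $\lfloor n/2 \rfloor = m$ or $m-1$ disjoint pairs of chords and applying this reduction to each pair should take the boundary count from $n$ (or whatever the all-in count is) down to the target value $n - 2m + 2$. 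I would then check that one cannot do better: since $b$ has the same parity as $n$ and $b \geq 1$, the only way to beat genus $m$ would require $b < n-2m+2$, which for $n=2m$ means $b = 0$ (impossible, every thickened diagram has a boundary) and for $n = 2m-1$ would also be excluded by a parity/positivity argument, so $m$ is a genuine upper bound. Combined with Proposition~\ref{lem-consec}, which guarantees the genus range is an interval, the two computed endpoints give $\gr(R_n) = [1, m]$.

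The step I expect to be the main obstacle is verifying that the paired in/out reduction actually achieves a net decrease of exactly $2$ boundary curves \emph{per pair} when applied simultaneously to several pairs — interactions between pairs could in principle cause a reduction to ``collide'' with an already-merged component and fail to drop the count, or conversely the curves merged by different pairs could already coincide. Making this rigorous requires either an inductive argument (reduce one pair, then re-examine the boundary structure of the smaller configuration and recurse on $R_{n-2}$, using that deleting two mutually-crossing chords from $R_n$ leaves a diagram equivalent to $R_{n-2}$) or a careful global boundary-tracing computation. I would favor the inductive route: show that from $R_n$ one can pass to a thickened diagram whose boundary data restricts, after removing two suitably-banded chords, to a thickened diagram of $R_{n-2}$ with two fewer boundary components, then apply the inductive hypothesis $\gr(R_{n-2}) = [1, m-1]$ together with the connected-sum-style bookkeeping of Lemma~\ref{lem-connectsum} or a direct Euler-characteristic count. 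The base cases $n = 3$ ($R_3 = 123123$, known to have genus range $[1,1]=\{1\}$... in fact $[1,2]$ must be checked against the $n=3$ table, which lists $\{1,2\}$ as realizable) and $n = 4$ would be handled by the explicit computations already tabulated in Section~\ref{sec-comp}.
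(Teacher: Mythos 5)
Your proposal assigns the wrong boundary counts to the two extreme band configurations, and this breaks both endpoint computations. You claim the all-in diagram $F_{R_n}$ has $n$ boundary components and hence genus $1$; in fact, for even $n=2m$ the boundary of $F_{R_n}$ has exactly \emph{two} components (one inner curve running through the chords in successive order, plus the outer circle of the annulus), so by Lemma~\ref{lem-euler} its genus is $\frac{1}{2}(2m-2+2)=m$: the all-in diagram is the witness for the \emph{maximum}, not the minimum. (For odd $n=2m-1$ the all-in diagram has three boundary curves and genus $m-1$, so your claim fails there as well, except coincidentally at $n=3$; the maximum $m$ is reached by thickening $R_n$ with chords $1,\dots,n-1$ all-in and the last chord one-in, one-out, which fuses the inner and outer curves into one.) The witness for the minimum is the opposite extreme: make \emph{every} chord one-in, one-out; then each boundary curve runs along one side of exactly two chords, there are $n$ curves, and the genus is $1$. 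With these two explicit configurations no pair-by-pair reduction is needed at all: the bound $g\le\lceil n/2\rceil=m$ follows from $b\ge 1$ and the parity of $b$ (as you note), genus $0$ is excluded because $R_n$ contains the sub-chord diagram $R_3=123123$, whose underlying graph is $K_{3,3}$ and hence non-planar, and Proposition~\ref{lem-consec} fills in the interval.

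The step you flagged as the main obstacle is indeed where your route fails, and the repair you suggest does not go through: Lemma~\ref{lem-connectsum} concerns connected sums, i.e.\ concatenations $W_1W_2$, whereas in $R_n$ every chord crosses every other chord, so deleting two chords does not exhibit $R_n$ as a connected sum of $R_{n-2}$ with a two-chord piece, and that lemma gives no control over how boundary curves recombine when the two chords are re-inserted; a genuinely global trace (which is what the paper does for its two explicit configurations) cannot be avoided by that bookkeeping. Two smaller gaps: the exclusion of genus $0$ is only asserted (``I expect $0\notin\gr(R_n)$'') with no mechanism, whereas some concrete argument such as the $K_{3,3}$ non-planarity observation is required; and the base-case remark about $R_3$ is muddled --- the lemma asserts $\gr(R_3)=[1,2]$, consistent with $m=2$ for $n=3$, not a singleton.
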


\begin{proof}
For an even $n=2m$ ($m\ge 1$), consider the all-in thickened diagram $F_{R_n}$. Then $F_{R_n}$ has 
exactly two boundary components:  One inside curve, tracing chords in successive order 
(see Fig.~\ref{repeat} (A)), and one outside. Hence $F_{R_n}$ achieves the maximum genus $m$.  By 
adding a one-in, one-out chord, the two curves are joined to a single component, therefore for an odd $n$, 
the resulting surface the maximum genus. 

Consider a thickened diagram for $R_n$ where every chord is one-in, one-out (see Fig.~\ref{repeat} (B)). 
Each boundary curve traces a single side of two chords. Then the resulting surface has $n$ boundary 
components, and genus $1$.

Since the chord diagram $D$ for $R_3=123123$ is isomorphic (as a graph) to the bipartite graph $K_{3,3}$, 
$D$ is non-planar and the genus range of any chord diagram that has $R_3$ as a sub-chord diagram does 
not contain $0$. The result follows from Lemma~\ref{lem-consec}.
\end{proof}

\begin{figure}[h]
  \centering
  \begin{overpic}[width=7cm]{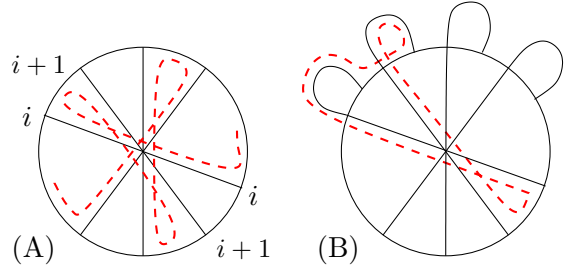} 
    \put(-5,0){(A)}
    \put (40,10){$i$}  
    \put (-3,25){$i$}
    \put (34,0){{\small $i+1$}}  
    \put (-5,35){{\small $i+1$}}
    \put(53,0){(B)}
  \end{overpic}

  \caption{Thickened diagrams of $R_n$ with the maximum genus (A) and the minimum genus (B).} 
  \label{repeat}
\end{figure} 

\begin{lemma}      \label{lem-range2}
(1)  For any $n>0$, there exists a chord diagram of $n$ chords with genus range $[0,1]$.
(2)  For any $n>2$, there exists a chord diagram of $n$ chords with genus range $[1,2]$.
\end{lemma}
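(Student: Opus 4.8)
The plan is to exhibit explicit chord diagrams realizing each genus range and then invoke the earlier results to pin down the ranges exactly. For part (1), the natural candidate is $U_n = 11\,22\cdots nn$, the chord diagram of $n$ disjoint (non-crossing, non-nested) chords. First I would compute $g(F_{U_n})$ directly: the all-in thickened diagram of a single chord $11$ has one inside boundary and one outside boundary, and since the chords of $U_n$ are arranged side by side along the backbone, each all-in band cuts off its own inside disk, so $F_{U_n}$ has $n+1$ boundary components, giving genus $0$ by Lemma~\ref{lem-euler}. Since any chord diagram has a non-singleton genus range (Lemma~\ref{lem-nosingleton}), and since the genus range is an interval of consecutive integers (Proposition~\ref{lem-consec}) containing $0$, it must contain $1$; I would then argue it contains nothing larger, either by a direct bound (with $n$ chords a single band flip changes $b(F)$ by exactly one, and starting from the planar $F_{U_n}$ the extremal genus is small) or, more cleanly, by induction using the connected-sum description: $U_n = U_1 U_{n-1}$, $\gr(U_1)=[0,1]$, and one checks $U_1$ is of $A(\mathrm{min},2)$ and $A(\mathrm{max},2)$ so that Lemma~\ref{lem-connectsum} forces both $\epsilon=1$ and $\epsilon'=1$, yielding $\gr(U_n)=[0+0-1+1,\,1+1-1] = [0,1]$ wait — one must be careful about which of $\epsilon,\epsilon'$ applies, so I would instead directly verify $A(\mathrm{min},2)$ for $U_1$ and $A(\mathrm{max},2)$ for $U_{n-1}$ inductively; this bookkeeping with the end-edge conditions is the fiddly part.

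For part (2), the natural candidate is the connected sum $(123123)(44\cdots nn) = R_3 \, U_{n-3}$, as suggested by the conjecture in the excerpt. By Lemma~\ref{lem-repeatword} with $n=3=2\cdot 2-1$, $\gr(R_3) = [1,1]$... but that contradicts Lemma~\ref{lem-nosingleton}, so in fact $\gr(R_3)=[1,2]$ — indeed Lemma~\ref{lem-repeatword} gives $[1,m]$ with $m=2$ when $n=3$. Good. For $n>3$ I would take $W = R_3 U_{n-3}$ and apply Lemma~\ref{lem-connectsum}: $\gr(R_3)=[1,2]$, $\gr(U_{n-3})=[0,1]$ by part (1), so the connected sum has genus range $[1+0-\epsilon,\ 2+1-\epsilon']$ for some $\epsilon,\epsilon'\in\{0,1\}$. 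To get exactly $[1,2]$ I need $\epsilon=0$ and $\epsilon'=1$. For $\epsilon=0$ it suffices (condition $(E_0)$) that one end edge — take that of $U_{n-3}$, or of $R_3$ — be traced by two boundary curves in every minimum-genus thickening; and for $\epsilon'=1$ I need (condition $(E_1')$) that both end edges be traced by a single boundary curve in every maximum-genus thickening. The case $n=3$ is just $R_3$ itself, already handled, and one should also note $U_{n-3}$ is empty when $n=3$ so the statement is consistent.

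The main obstacle is verifying these end-edge tracing conditions for $R_3$ and for $U_k$. For $U_k$ the structure is simple enough to analyze all thickenings by hand: I would track how the end edge's two boundary-curve segments connect up as each of the $k$ chords is independently set to one of its four states, and confirm that at genus $0$ the end edge is double-traced while at genus $1$ it is single-traced (flipping exactly one chord to one-in-one-out, which merges the outer circle with an inner one through the end edge). For $R_3$ I would use the two explicit thickenings from the proof of Lemma~\ref{lem-repeatword} — the all-in diagram (genus $2$, two boundary components, one inside running through all chords and one outside) and check whether its end edge is single-traced, and a minimum-genus ($g=1$) diagram and check its end edge is double-traced — and then argue these properties are forced, not just achieved, for the relevant extremal genus. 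An alternative to sidestep some of this: prove part (2) for $n>3$ by showing directly that $R_3 U_{n-3}$ has a thickening of genus $1$ (all of $R_3$ one-in-one-out, rest all-in) and one of genus $2$ (all-in), then use non-planarity of $R_3$ as a sub-chord diagram (Lemma~\ref{lem-repeatword}'s $K_{3,3}$ argument) to exclude $0$ and a parity/Euler-characteristic bound to exclude $3$, invoking Proposition~\ref{lem-consec} to fill in. I would pursue whichever route keeps the end-edge casework shortest, most likely the direct-construction-plus-bounds route for part (2) and the connected-sum induction for part (1).
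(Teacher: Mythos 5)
Your construction is the paper's: $U_n$ for $[0,1]$ (built as iterated connected sums of $U_1$) and $R_3U_{n-3}$ for $[1,2]$, with the ranges controlled by Lemma~\ref{lem-connectsum}. But the end-edge bookkeeping, which you yourself identify as the crux and defer, is exactly where your plan for part (1) goes wrong. You propose to verify $A({\rm min},2)$ for $U_1$ and $A({\rm max},2)$ for $U_{n-1}$. The latter condition is false and, worse, would prove the wrong statement: a maximum-genus (genus $1$) thickening of $U_1$ has a single boundary component, so every edge is singly traced and $U_1$ is $A({\rm max},1)$; and if a summand were $A({\rm max},2)$ (hence $E({\rm max},2)$), case $(E_0')$ of Lemma~\ref{lem-connectsum} would force $\epsilon'=0$ and give $\gr(U_n)=[0,2]$. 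What is needed is the paper's pair: $A({\rm min},2)$ for one summand, giving $\epsilon=0$ by $(E_0)$ (here even automatic, since genera are nonnegative), and $A({\rm max},1)$ for \emph{both} summands, giving $\epsilon'=1$ by $(E_1')$; your remark that these conditions ``force $\epsilon=1$'' misreads $(E_0)$/$(E_1)$. In part (2) you do state the correct conditions (end edge double-traced at minimum genus for the $U$ factor --- note you must use the $U$ factor here, since $R_3$ is $E({\rm min},1)$ --- and single-traced at maximum genus for both factors), and these are the same conditions needed in part (1); but in both parts the induction only runs if you also show the conditions $A({\rm min},2)$ and $A({\rm max},1)$ \emph{persist} under the connected sum, which your proposal never engages. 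Minor slips of the same flavor: $F_{U_n}$ has $n+2$ boundary components, not $n+1$ (with $n+1$ the genus would be $1/2$); and the all-in thickening of $R_3$ has three boundary curves and genus $1$, while any genus-$2$ thickening of $R_3$ has a single boundary curve --- which is precisely why $R_3$ is $A({\rm max},1)$ for free.

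The fallback routes you say you would most likely follow do not close the upper end of either range. ``$F_{U_n}$ is planar, so the extremal genus is small'' is not an argument: Lemma~\ref{lem-euler} allows genus up to $\lceil n/2\rceil$, and planarity of the all-in thickening says nothing about the other thickenings --- the diagram $123321$ has a planar all-in thickening yet genus range $[0,2]$ by Lemma~\ref{lem-range3words}. (Also, re-attaching a single band end changes $b(F)$ by $0$ or $\pm 2$, never by one, since $b\equiv n \pmod 2$.) Likewise for part (2): after exhibiting genus-$1$ and genus-$2$ thickenings of $R_3U_{n-3}$ and excluding $0$ by non-planarity, there is no ``parity/Euler-characteristic bound'' excluding genus $3$ once $n\geq 5$, because the only general bound is $g\leq\lceil n/2\rceil$. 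Ruling out the top genus is exactly the content carried by the $A({\rm max},1)$ conditions and case $(E_1')$ in the inductive connected-sum argument, so you should correct the conditions in part (1) and carry out that induction rather than rely on these shortcuts.
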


\begin{proof}
The chord diagram $U_1=11$ has genus range $[0,1]$  and also has the properties $A({\rm min}, 2)$ and 
$A({\rm max}, 1)$. By Lemma~\ref{lem-connectsum} (cases $(E_0)$  and $(E'_1)$), the chord diagram of 
$U_2={U_1}^2$ has genus range $[0,1]$, and its end edge retains the conditions $A({\rm min}, 2)$ and 
$A({\rm max}, 1)$. Inductively, $U_n$ has genus range $[0,1]$ for any $n$.

Recall that the chord diagram corresponding to $R_3$ is non-planar. The chord diagram of $R_3$ has genus 
range $[1,2]$ (Lemma~\ref{lem-repeatword}), and has property $A({\rm max}, 1)$. Then the chord diagram 
of $R_3 U_1$ has genus range $[1,2]$ by Lemma~\ref{lem-connectsum} (cases $(E_0)$ and $(E'_1)$),  
and retains the condition $A({\rm max}, 1)$. Inductively, $R_3 U_m$ has genus range $[1,2]$ for any 
$m\geq 0$, hence for any $n=m+3>2$. 
\end{proof}

\begin{lemma}\label{lem-1212}
For any chord diagram $W$ with $\gr(W)=[g, g']$, we have $\gr(R_2 W)=[g, g'+1]$. 
\end{lemma}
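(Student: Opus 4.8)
The plan is to apply Lemma~\ref{lem-connectsum} to the concatenation $R_2 W$, using the known genus range $\gr(R_2) = [1,1]$ (this is $U_1$-free, but in fact $R_2 = 1212$ has genus range $[1,1]$ — wait, I should double-check: $R_2 = 1212$ gives the thickened all-in diagram on two interlocking chords, which is a torus with boundary, genus $1$; the one-in, one-out variants can only raise or keep this, and the lower analysis shows it stays at $1$, so indeed $\gr(R_2) = [1,1]$, a \emph{singleton}). But Lemma~\ref{lem-nosingleton} says no chord diagram has a singleton genus range! So actually $\gr(R_2) = [0,1]$ or $[1,2]$; since $1212$ is planar (a chord diagram with a single crossing pair embeds in the sphere with the obvious surface of genus... hmm). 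Let me reconsider: $R_2 = 1212$ — by Lemma~\ref{lem-repeatword} we would want $n = 2$, but that lemma requires $n > 2$. The remark after the conjectures states that $1212$ has genus range $\{0,1\}$. So $\gr(R_2) = [0,1]$, with minimal genus $0$ achieved by some thickened diagram. The key structural facts I need about $R_2$ are: what are its $A(\mathrm{min},c)$ / $E(\mathrm{min},c)$ / $A(\mathrm{max},c)$ / $E(\mathrm{max},c)$ properties with respect to its end edge.

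First I would determine the end-edge tracing properties of $R_2 = 1212$. The end edge is the backbone arc through the base point, between the first and last chord endpoints. I will examine the few thickened diagrams of $1212$ directly (there are $4^2 = 16$, but by symmetry only a handful of cases): for the genus-$1$ (maximum) realizations I expect to verify that the end edge \emph{can} be traced by a single boundary curve, and ideally that in \emph{every} maximum-genus realization it is traced by a single curve, i.e. $R_2$ is of $A(\mathrm{max},1)$; and for genus-$0$ (minimum) I want to show the end edge can be traced by two boundary curves in some minimal-genus realization, i.e. $R_2$ is of $E(\mathrm{min},2)$. Given these two properties of $R_2$, I then invoke Lemma~\ref{lem-connectsum}: for the lower endpoint of $\gr(R_2 W)$, case $(E_0)$ applies (since one of the two words, namely $R_2 = W_1$, is $A(\mathrm{min},2)$ — or at least $E(\mathrm{min},2)$ suffices to... no, $(E_0)$ as stated needs $A(\mathrm{min},2)$; I should check whether $R_2$ is actually $A(\mathrm{min},2)$, i.e. that \emph{every} genus-$0$ thickening of $1212$ traces the end edge by two curves), giving $\epsilon = 0$ and lower endpoint $0 + g - 0 = g$. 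For the upper endpoint, case $(E_1')$ requires \emph{both} $W_1 = R_2$ and $W_2 = W$ to be $A(\mathrm{max},1)$; but $W$ is arbitrary, so this will generally fail, and instead $(E_0')$ applies whenever $W$ is of $E(\mathrm{max},2)$. Hmm — this does not immediately give $\epsilon' = 0$ unconditionally.

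The main obstacle, then, is the upper endpoint: I need $\gr(R_2 W) = [g, g'+1]$, which by the formula $g = g_1 + g_2 + \alpha/2 - 1$ with $g_1 \in \gr(R_2)$ means I want to realize genus $g' + 1$, i.e. I want a maximum-genus thickening of $W$ ($g_2 = g'$) combined with a genus-$1$ thickening of $R_2$ ($g_1 = 1$) in the configuration with $\alpha = 2$ — but that gives $1 + g' + 1 - 1 = g' + 1$, \emph{good}, provided such a combination is attainable with $\alpha = 2$. Since $\alpha = 2$ happens whenever \emph{at least one} end edge is traced by two boundary curves, and I should exhibit a genus-$1$ (maximum-genus) thickening of $R_2$ whose end edge is traced by \emph{two} boundary curves — that is, I want $R_2$ to be of $E(\mathrm{max},2)$, not $A(\mathrm{max},1)$. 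So the real content of the proof is the claim: \emph{the chord diagram $1212$ admits a thickening of genus $1$ in which the end edge is traced by two boundary curves, and also admits a thickening of genus $0$ in which the end edge is traced by two boundary curves, and every genus-$0$ thickening traces the end edge by two curves.} Equivalently, $R_2$ is simultaneously of $A(\mathrm{min},2)$ and of $E(\mathrm{max},2)$. Granting that, the lower bound of $\gr(R_2W)$ comes from $(E_0)$ ($\epsilon = 0$) and the upper bound from combining the genus-$1$, two-curve thickening of $R_2$ with a maximum-genus thickening of $W$, yielding genus $g'+1$; the intermediate values fill in by Proposition~\ref{lem-consec}. I would carry this out in the order: (i) enumerate the relevant thickenings of $1212$ and extract the two boundary-curve facts about its end edge; (ii) plug into the connected-sum formula for the lower endpoint via $(E_0)$; (iii) construct the explicit genus-$(g'+1)$ thickening of $R_2 W$ from a maximum thickening of $W$ and the two-curve genus-$1$ thickening of $R_2$; (iv) conclude by consecutiveness. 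The step (i) bookkeeping — pinning down that $1212$ really does have the claimed end-edge behavior at both extremes — is where the care is needed, but it is a finite check on a two-chord diagram.
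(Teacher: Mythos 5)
Your plan is correct and is essentially the paper's own proof: the paper likewise notes that $R_2=1212$ has genus range $[0,1]$ and is of $A({\rm min},2)$ and $A({\rm max},2)$ (hence $E({\rm max},2)$), and then applies Lemma~\ref{lem-connectsum} cases $(E_0)$ and $(E'_0)$ to get $\epsilon=\epsilon'=0$. Your only deviation is cosmetic: you demand just $E({\rm max},2)$ rather than the stronger $A({\rm max},2)$ the paper asserts, and you leave the finite check on the two-chord diagram (which does confirm both end-edge properties) as a verification step rather than recording it.
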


\begin{proof}
The chord diagram  $R_2$ has genus range $[0,1]$ and is of $A({\rm min}, 2)$ and $A({\rm max}, 2)$, 
so it is $E({\rm max}, 2)$. By  Lemma~\ref{lem-connectsum}  (cases $(E_0)$ and $(E'_0)$), we obtain 
the result.
\end{proof}

\begin{lemma}\label{lem-repeatrepeat}
For  $G_m=(1212)(3434)\cdots ( (2m-1)2m(2m-1)2m )$, we have  $\gr(G_m)= [0, m]$ for any $m>0$. 
\end{lemma}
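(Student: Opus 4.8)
The plan is to prove this by induction on $m$, using Lemma~\ref{lem-1212} as the engine. The key observation is that $G_m$ is, by definition, $R_2^{\,m}$, and since the letters in the block $(2i-1)2i(2i-1)2i$ occur nowhere else in the word, the double-occurrence word $G_m$ is literally the concatenation (hence the connected sum, up to the renaming built into the notation $W_1W_2$) of $R_2 = 1212$ with $R_2^{\,m-1} = G_{m-1}$. So $G_m = R_2\, G_{m-1}$ as chord diagrams, and Lemma~\ref{lem-1212} applies directly with $W = G_{m-1}$.

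For the base case $m=1$ we have $G_1 = R_2 = 1212$, whose genus range is $[0,1]$: this is recorded in the proof of Lemma~\ref{lem-1212} (the chord diagram $R_2$ has genus range $[0,1]$, being of $A(\mathrm{min},2)$ and $A(\mathrm{max},2)$), and is consistent with the $n=2$ row of the computed table $\mathcal{GR}_n$. Thus $\gr(G_1) = [0,1] = [0,m]$ for $m=1$.

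For the inductive step, assume $\gr(G_{m-1}) = [0, m-1]$ for some $m \ge 2$. Applying Lemma~\ref{lem-1212} with $[g,g'] = [0,m-1]$ gives
\[
\gr(G_m) = \gr(R_2\, G_{m-1}) = [\,0,\ (m-1)+1\,] = [0,m],
\]
which completes the induction and proves $\gr(G_m) = [0,m]$ for all $m > 0$.

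There is essentially no hard step here: the content has been front-loaded into Lemma~\ref{lem-1212}, and the only thing one must be a little careful about is the identification $G_m = R_2\, G_{m-1}$ at the level of chord diagrams — i.e., that peeling off the first block $1212$ really exhibits $G_m$ as a connected sum in the sense of the paragraph preceding Lemma~\ref{lem-connectsum}. This is immediate from the definition of $G_m$ because the four-letter blocks use pairwise disjoint alphabets, so no chord of the first block is interleaved with chords of the remaining blocks, and the base point may be taken on the end edge separating the first block from the rest.
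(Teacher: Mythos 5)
Your proof is correct and follows exactly the paper's route: the paper also deduces the lemma from Lemma~\ref{lem-1212} by induction on $m$, with $G_m = R_2\,G_{m-1}$ and base case $\gr(R_2)=[0,1]$. Your write-up just makes explicit the details (the disjoint-alphabet/connected-sum identification and the base case) that the paper leaves implicit.
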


\begin{proof}
This follows from Lemma~\ref{lem-1212} by induction.
\end{proof}

\begin{lemma}\label{lem-repeatrepeat11}
 For any $k, m >0 $, we have  $\gr(G_m U_k)=[0, m+1]$.
\end{lemma}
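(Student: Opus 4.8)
The plan is to combine Lemma~\ref{lem-repeatrepeat}, which gives $\gr(G_m)=[0,m]$, with the connected-sum machinery of Lemma~\ref{lem-connectsum}. Write $G_m U_k = G_m \cdot U_k$ as a connected sum of the chord diagrams $G_m$ (genus range $[0,m]$) and $U_k = {U_1}^k$ (genus range $[0,1]$ by Lemma~\ref{lem-range2}(1)). Lemma~\ref{lem-connectsum} then tells us that $\gr(G_m U_k) = [0 + 0 - \epsilon,\; m + 1 - \epsilon']$ for some $\epsilon,\epsilon' \in \{0,1\}$, so the target is to show $\epsilon = 0$ and $\epsilon' = 0$, i.e. that the lower end stays $0$ and the upper end is $m+1$.

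First I would handle $\epsilon = 0$: by case $(E_0)$ of Lemma~\ref{lem-connectsum} it suffices that at least one of the two factors satisfies $A(\mathrm{min},2)$ — every minimum-genus thickened diagram traces its end edge by two boundary curves. The natural choice is $U_k$: the proof of Lemma~\ref{lem-range2}(1) already establishes (inductively, via the $A(\mathrm{min},2)$/$A(\mathrm{max},1)$ bookkeeping carried along the induction) that $U_k$ has property $A(\mathrm{min},2)$, so $\epsilon=0$. Alternatively one can invoke that $G_m$ itself is $A(\mathrm{min},2)$, tracking the end-edge conditions through the induction in Lemma~\ref{lem-repeatrepeat}; but routing the argument through $U_k$ is cleaner since Lemma~\ref{lem-range2} already records exactly the property needed.

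Next, $\epsilon' = 0$: by case $(E_0')$ it suffices that one factor satisfies $E(\mathrm{max},2)$ — there exists a maximum-genus thickened diagram that traces its end edge by two boundary curves. Here I would use $U_k$ again (its all-in thickening attains the maximum genus $1$ and, as in the $R_2$ discussion inside Lemma~\ref{lem-1212}, an all-in configuration of $U_k$ has its end edge traced by two boundary curves — the outside circle and an inside one), or, if more convenient, observe that $G_m = {R_2}^m$ and an all-in thickened diagram of $R_2$ is both $A(\mathrm{max},2)$ and hence $E(\mathrm{max},2)$, a fact already used in Lemma~\ref{lem-1212}; propagating this through the connected sums $R_2 \cdot R_2 \cdots R_2$ that build $G_m$ shows $G_m$ is $E(\mathrm{max},2)$. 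Either way $\epsilon' = 0$, giving $\gr(G_m U_k) = [0, m+1]$.

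The main obstacle is bookkeeping rather than conceptual: one must be careful that the end-edge conditions ($A(\mathrm{min},2)$, $E(\mathrm{max},2)$, etc.) are correctly inherited through the iterated connected sums that define $G_m$ and $U_k$, since Lemma~\ref{lem-connectsum} only certifies the genus range of $W_1 W_2$, not its end-edge type. So before the final step I would state a short auxiliary observation (or extract it from the proofs of Lemmas~\ref{lem-range2} and~\ref{lem-1212}): a connected sum $W_1 W_2$ in which, say, $W_2$ retains $A(\mathrm{min},2)$ and $A(\mathrm{max},1)$ (the $U_1$ situation) again retains those conditions, so $U_k$ carries them for all $k$; and a connected sum built from $R_2$-blocks, each $A(\mathrm{min},2)$ and $E(\mathrm{max},2)$, yields a diagram that is still $E(\mathrm{max},2)$. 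With those in hand, applying Lemma~\ref{lem-connectsum} once to $G_m$ and $U_k$ closes the argument.
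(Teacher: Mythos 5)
Your overall framework (one application of Lemma~\ref{lem-connectsum} to the factors $G_m$ and $U_k$, with $\epsilon=0$ and $\epsilon'=0$) is sound, and the $\epsilon=0$ half is fine: $U_k$ is $A(\mathrm{min},2)$ as recorded in the proofs of Lemmas~\ref{lem-range2} and~\ref{lem-hk} (or simply note $g_1+g_2-\epsilon=0-\epsilon\geq 0$ forces $\epsilon=0$). The genuine problem is your preferred route to $\epsilon'=0$: the claim that $U_k$ is $E(\mathrm{max},2)$ because ``its all-in thickening attains the maximum genus $1$'' is false. The chords $11,22,\dots,kk$ of $U_k$ are pairwise non-crossing, so the all-in thickening of $U_k$ is planar and has genus $0$, the \emph{minimum}; the analogy with $R_2=1212$ breaks down precisely because the two chords of $R_2$ cross. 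Worse, for $U_1$ the maximum genus $1$ forces $b=1$ by Lemma~\ref{lem-euler}, so every edge, in particular the end edge, is traced by a single boundary curve: $U_1$ is $A(\mathrm{max},1)$, the opposite of $E(\mathrm{max},2)$, and the paper's induction asserts the same $A(\mathrm{max},1)$ property for every $U_k$. So the $U_k$-based justification of $\epsilon'=0$ would fail (certainly at $k=1$, where the lemma must still hold).

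Your fallback is the correct fix and is exactly the paper's argument: establish that $G_m={R_2}^m$ is $E(\mathrm{max},2)$ (each $R_2$ is $A(\mathrm{max},2)$ by the proof of Lemma~\ref{lem-1212}, and the property propagates through the connected sums, case (C) of Fig.~\ref{connect}), then apply case $(E_0')$ of Lemma~\ref{lem-connectsum}. The paper does this with $U_1$ to get $\gr(G_m U_1)=[0,m+1]$ and then inducts on $k$ using that $U_1$ and $G_m U_k$ are $A(\mathrm{max},1)$, via case $(E_1')$, to keep the maximum at $m+1$; your single connected sum with all of $U_k$ at once, using $\gr(U_k)=[0,1]$ and $G_m$ of $E(\mathrm{max},2)$, is an equally valid and essentially equivalent packaging. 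In short: keep the $G_m$ branch, delete the $U_k$-as-$E(\mathrm{max},2)$ branch.
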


\begin{proof}
The chord diagram  $G_m$ is of $E({\rm max}, 2)$. By Lemma~\ref{lem-connectsum} $(E_0')$,  
we have $\gr(G_n U_1 )=[0, m+1]$. The chord diagrams $U_1$ and  $G_m U_k$ for $k>0$ 
are of $A({\rm max}, 1)$, hence by Lemma~\ref{lem-connectsum} $(E'_1)$  the statement holds by 
induction.
\end{proof}

We use the notation $X=12341342$.

\begin{lemma}\label{lem-highgenus}
For any $k>0$,  $k \in \Z$,  there exists a chord diagram with $n$ chords, where $n=4k-1$ or $4k$,
having genus range $[k, 2k]$.
\end{lemma}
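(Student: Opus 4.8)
The plan is to build the desired chord diagram as a connected sum of a fixed ``high-genus'' block $X=12341342$ with copies of the all-in word $U_1=11$, and to control the genus range through Lemma~\ref{lem-connectsum} using the end-edge conditions $A(\mathrm{min},c)$, $A(\mathrm{max},c)$, $E(\mathrm{min},c)$, $E(\mathrm{max},c)$. First I would analyze the base case: by direct computation (enumerating the $4^4$ band attachments, or better, using Lemma~\ref{lem-euler} together with a boundary-curve count as in the proof of Lemma~\ref{lem-range3words}) show that $\gr(X)=[1,2]$, so that with $k=1$ the word $X$ itself, on $n=3=4\cdot 1-1$ chords, already realizes $[k,2k]=[1,2]$. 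In the same computation I would record the crucial end-edge behavior of $X$: that it satisfies $A(\mathrm{max},1)$ — every maximum-genus ($g=2$) thickening traces the end edge by a single boundary curve — and that it is $E(\mathrm{min},2)$ or better, $A(\mathrm{min},2)$, i.e. the minimum-genus thickenings trace the end edge by two boundary curves. These are exactly the hypotheses needed to feed $X$ into Lemma~\ref{lem-connectsum} cases $(E_0)$ and $(E'_1)$.

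Next I would take connected sums with $U_1$. Recall from the proof of Lemma~\ref{lem-range2} that $U_1$ has genus range $[0,1]$ and satisfies $A(\mathrm{min},2)$ and $A(\mathrm{max},1)$. Applying Lemma~\ref{lem-connectsum} with $W_1=X$ (which is $A(\mathrm{min},2)$ so case $(E_0)$ gives $\epsilon=0$, and is $A(\mathrm{max},1)$ so together with $U_1$ being $A(\mathrm{max},1)$ case $(E'_1)$ gives $\epsilon'=1$) yields
\[
\gr(X U_1)=[\,1+0-0,\ 2+1-1\,]=[1,2],
\]
and the connected sum $X U_1$ retains $A(\mathrm{min},2)$ and $A(\mathrm{max},1)$ (one must check the end edge of the connected sum inherits these, which follows because in the extremal thickenings of the pieces the end-edge tracing is unchanged by the band attached at the base points — this is the routine bookkeeping behind Fig.~\ref{connect}). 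Inductively $X U_j$ has genus range $[1,2]$ for all $j\ge 0$; this sweeps through all $n\equiv 3,0 \pmod 4$ below $4k$, but does not by itself raise the genus.

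To reach $[k,2k]$ I would instead stack $k$ copies of $X$: set $Y_k = X^{k}$ (iterated connected sum). At each step, combining $W_1=X^{k-1}$ with $W_2=X$, both pieces are $A(\mathrm{min},2)$ (case $(E_0)$, $\epsilon=0$) and both are $A(\mathrm{max},1)$ (case $(E'_1)$, $\epsilon'=1$), so
\[
\gr(X^{k})=[\,(k-1)\cdot 1 + 1 - 0,\ (k-1)\cdot 2 + 2 - 1\,]=[k,\,2k-1].
\]
This has the right lower endpoint but the upper endpoint is one short, because every connected sum in an all-in--compatible configuration costs a $-1$. To recover the missing genus I would attach one extra $U_1$ using the \emph{other} side of Lemma~\ref{lem-connectsum}: since $X$ (hence $X^{k}$) is $E(\mathrm{max},2)$, case $(E'_0)$ gives $\epsilon'=0$ for the final connected sum with $U_1$, while $(E_0)$ still gives $\epsilon=0$; thus $\gr(X^{k}U_1)=[k,\,2k-1+1]=[k,2k]$. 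Counting chords: $X^{k}$ has $4k-1$ chords once we also observe that $X$ on $4$ letters, when taken in connected sum $k$ times while \emph{merging} one chord per junction appropriately — or more cleanly, I would arrange instead that $X^k$ genuinely has $4k$ chords and $X^{k-1}\cdot(\text{3-chord piece})$ gives $4k-1$, matching the two cases $n=4k-1$ and $n=4k$ in the statement. The main obstacle, and the step I expect to require the most care, is the base-case end-edge analysis of $X=12341342$: one must verify not just $\gr(X)=[1,2]$ but the precise quantified statements that \emph{all} minimum-genus thickenings double-trace the end edge and \emph{all} maximum-genus ones single-trace it (and the $E(\mathrm{max},2)$ claim), since these drive every induction step. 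Everything after that is a mechanical application of Lemma~\ref{lem-connectsum} together with the observation, already implicit in its proof, that these end-edge conditions are preserved under connected sum.
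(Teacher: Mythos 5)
Your plan follows the paper's general strategy (stacking copies of $X=12341342$ and controlling the genus range via Lemma~\ref{lem-connectsum}), but it hinges on a base-case claim that is false and that you yourself contradict later. You assert that $X$ is $A(\mathrm{max},1)$, i.e.\ that \emph{every} genus-$2$ thickening of $X$ traces the end edge by a single curve. In fact $X$ is $E(\mathrm{max},2)$: the paper deduces this from the computed fact $\gr(R_3X)=\gr(12312345674675)=[2,4]$, since $R_3$ is $A(\mathrm{max},1)$ and the upper endpoint $4=2+2-0$ forces $\epsilon'=0$, which by $(E'_0)$ requires $X$ to admit a genus-$2$ thickening double-tracing its end edge. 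With $E(\mathrm{max},2)$ the maxima add without penalty and $\gr(X^k)=[k,2k]$ directly, which is exactly what the lemma needs. Under your claimed $A(\mathrm{max},1)$-both-sides scenario the recursion would not even give $[k,2k-1]$: with $\epsilon'=1$ at every step the maximum genus satisfies $M_k=M_{k-1}+2-1$, so $M_k=k+1$; your displayed formula $(k-1)\cdot 2+2-1$ silently assumes $\gr(X^{k-1})=[k-1,2(k-1)]$, contradicting your own induction hypothesis. And your later sentence ``since $X$ (hence $X^k$) is $E(\mathrm{max},2)$'' is incompatible with the earlier $A(\mathrm{max},1)$ claim, since the two conditions are mutually exclusive.

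The repair you propose also breaks the chord count: connected sum does not merge chords, so $X^k$ has exactly $4k$ chords and $X^kU_1$ has $4k+1$, which matches neither $n=4k$ nor $n=4k-1$ in the statement. The paper avoids both problems: $X^k$ itself (with $4k$ chords) realizes $[k,2k]$ once one knows $X$ is $A(\mathrm{min},2)$ and $E(\mathrm{max},2)$, and the $n=4k-1$ case is handled by $X^{k-1}R_3$, not by an unspecified ``3-chord piece.'' Note also how the paper establishes the end-edge properties of $X$: not by enumerating its $4^4$ thickenings in isolation (which would only be half the work), but indirectly from $\gr(R_3X)=[2,4]$ together with $R_3$ being $E(\mathrm{min},1)$ and $A(\mathrm{max},1)$ — if $X$ were $E(\mathrm{min},1)$ the minimum of $R_3X$ would be $1$, and if $X$ were $A(\mathrm{max},1)$ the maximum would be $3$. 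Your direct-enumeration idea could in principle verify the correct conditions $A(\mathrm{min},2)$ and $E(\mathrm{max},2)$, but as written your proof records the wrong outcome of that computation and the induction built on it does not yield $[k,2k]$.
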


\begin{proof}
Computer calculation shows that the chord diagram $D$ corresponding to $W=12312345674675$ has 
genus range $[2,4]$. The word $W$  is the concatenation of $R_3=123123$ and $X=12341342$. 
Computer calculation also shows that $\gr(X)=[1,2]$. By Lemma~\ref{lem-repeatword}, we also have 
$\gr(R_3)=[1,2]$.

\begin{figure}[h]
  \centering
  \includegraphics[width=3cm]{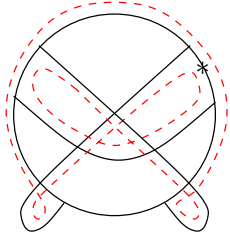} 
    \put(-13,60){$*$} 
  \caption{Edges traced by a single boundary curve of a genus $1$ surface.}
  \label{F123123}
\end{figure}

The diagram of $R_3$  is of $E({\rm min}, 1)$ as depicted in Fig.~\ref{F123123}. This implies that 
$X=12341342$ is of $A({\rm min}, 2)$. (Otherwise $R_3 X$ has minimum genus 1 by 
Lemma~\ref{lem-connectsum} $(E_0)$.) A connected sum of two chord diagrams of $A({\rm min}, 2)$ 
is again a diagram of $A({\rm min}, 2)$ (case (C) in Fig.~\ref{connect}). By Lemma~\ref{lem-connectsum} 
$(E_0)$  again, inductively, the minimum genus of $X^k$ is $k$, and $X^k$ is of $A({\rm min}, 2)$.

Any thickened diagram of $R_3$ with genus $2$ must have a single boundary component, 
and therefore, every edge is singly traced, hence it is $A({\rm max}, 1)$. Since $\gr(W)=[2,4]$,
Lemma~\ref{lem-connectsum} $(E'_0)$ implies that $X$ is of $E({\rm max}, 2)$. Note that the end edge 
$e$ of $X^k$ for any $k$ is of $E({\rm max}, 2)$ (case (C) in Fig. ~\ref{connect}). By using 
Lemma~\ref{lem-connectsum} $(E'_0)$ inductively, we obtain that the maximum genus of $X^k$ is $2k$. 

Hence the diagram for $X^{k}$ has genus range $[k, 2k]$ and $4k$ chords. The diagram corresponding 
to $X^{k-1} R_3  $ has $n=4k-1$ chords, the minimum genus $(k-1)+1=k$, the maximum genus 
$2(k-1)+2=2k$, by Lemma~\ref{lem-connectsum}  as desired.
\end{proof}

We note here that computer calculation was critical for this proof, since it would otherwise be difficult to 
determine the genus range of $R_3 X=12312345674675$.
 
The proof of Lemma~\ref{lem-highgenus} shows that $X^k$ is of $A({\rm min}, 2)$ and $E({\rm max}, 2)$ 
for every $k>0$.

\begin{lemma}\label{lem-hk}
For any $h>0$ and $ k\geq 0$, there is a chord diagram  $W(h, k)$ of $n=4k+h$ chords with genus range 
$[k, 2k+1]$.
\end{lemma}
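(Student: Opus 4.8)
The plan is to build $W(h,k)$ by taking a connected sum of $X^k$ with a chord diagram realizing the genus range $[0,1]$ on $h$ chords, and then to track the end-edge conditions carefully through Lemma~\ref{lem-connectsum}. Concretely, I would set $W(h,k) = X^k U_h$. By the remark following Lemma~\ref{lem-highgenus}, $X^k$ has genus range $[k,2k]$ and is of $A(\mathrm{min},2)$ and $E(\mathrm{max},2)$. The diagram $U_h$ has genus range $[0,1]$ by Lemma~\ref{lem-range2}(1), and from the proof of that lemma its end edge satisfies $A(\mathrm{min},2)$ and $A(\mathrm{max},1)$. Applying Lemma~\ref{lem-connectsum}: since $X^k$ is $A(\mathrm{min},2)$, case $(E_0)$ gives $\epsilon = 0$, so the minimum genus of $X^k U_h$ is $k+0-0 = k$; since $X^k$ is $E(\mathrm{max},2)$, case $(E_0')$ gives $\epsilon' = 0$, so the maximum genus is $2k + 1 - 0 = 2k+1$. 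The number of chords is $4k + h$, as required, and Proposition~\ref{lem-consec} guarantees the genus range is the full interval $[k, 2k+1]$.

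There is one degenerate case to handle separately: when $k = 0$, the word $X^0$ is empty and $W(h,0)$ should just be $U_h$, which has genus range $[0,1] = [k, 2k+1]$ directly by Lemma~\ref{lem-range2}(1). So I would state the construction as $W(h,k) = X^k U_h$ for $k \geq 1$ and $W(h,0) = U_h$, or equivalently just note that for $k=0$ the claim reduces to the already-established $[0,1]$ realization. I should also double-check the edge case $h$ small (e.g. $h=1$): $U_1 = 11$ has genus range $[0,1]$ and the needed end-edge conditions, so no problem arises.

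The main obstacle I anticipate is making sure the end-edge hypotheses of Lemma~\ref{lem-connectsum} are invoked with the correct direction of the logical equivalences. The cases $(E_0)$ and $(E_1)$ (resp. $(E_0')$, $(E_1')$) are "if and only if" statements phrased in terms of \emph{one} versus \emph{both} factors having a universal ("any thickened diagram of genus $g_1$...") or existential ("there exists a thickened diagram...") property, and it is easy to mix up $A(\cdot,\cdot)$ with $E(\cdot,\cdot)$ or to conflate the minimum-genus and maximum-genus conditions. I would be careful to cite the summary table after Lemma~\ref{lem-connectsum}: for $\epsilon = 0$ I need \emph{one} factor of $A(\mathrm{min},2)$ (supplied by $X^k$), and for $\epsilon' = 0$ I need \emph{one} factor of $E(\mathrm{max},2)$ (again supplied by $X^k$). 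Since only one factor needs the property in each of these two cases, I do not need any information about the end-edge behavior of $U_h$ at all for the genus computation — a pleasant simplification — though I would still want to record that $W(h,k)$ retains $A(\mathrm{min},2)$ and $E(\mathrm{max},2)$ (inherited through case (C) of Fig.~\ref{connect} type reasoning, since connected summing along an $A(\mathrm{min},2)$ end edge preserves $A(\mathrm{min},2)$) in case later lemmas need to iterate this construction.
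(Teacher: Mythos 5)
Your proposal is correct and follows essentially the same route as the paper: form the connected sum $U_h X^k$, use Lemma~\ref{lem-connectsum} case $(E_0)$ to get minimum genus $k$ and case $(E_0')$ with $X^k$ of $E({\rm max},2)$ to get maximum genus $2k+1$, handling $k=0$ via the $[0,1]$ realization. The only (harmless) difference is that you let $X^k$ supply the $A({\rm min},2)$ condition for $(E_0)$, whereas the paper uses $U_h$ for that role; both are justified.
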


\begin{proof}
Let $W(h, k)= U_h X^{k}$ which has $n=4k+h $ chords. The diagram $U_1$ is of $A({\rm min}, 2)$, and 
inductively, so is $U_h$ for any $h>0$. Lemma~\ref{lem-connectsum} $(E_0)$  implies that  $W(h, k)$ has 
the minimum genus $k$.

We have that $U_h$ has genus range $[0,1]$ and is of $A({\rm max}, 1)$, and $X^k$ is of $E({\rm max}, 2)$ 
(Proof of Lemma~\ref{lem-highgenus}). Hence $W(h,k)$  has highest genus $2k+1$. The statement holds 
for $k=0$ as well, from the proof of Lemma~\ref{lem-range2} (1).
\end{proof}

\begin{lemma}\label{lem-morehk}
For any $h>0$ and $k, \ell  \geq 0$, there is a chord diagram $V(h,  k, \ell)$ with $n= 4k + 2\ell +h $ chords 
such that  $\gr(V (h,k, \ell) )=[k, 2k + \ell + 1 ]$. In the case $h=0$, for any $k, \ell  \geq 0$, there is a 
chord diagram $V(0,  k, \ell)$ with $n= 4k + 2\ell $ chords such that  $\gr(V (0,k, \ell) )=[k, 2k + \ell  ]$. 
\end{lemma}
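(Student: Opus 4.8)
The plan is to build $V(h,k,\ell)$ by taking a connected sum of the building block $X^k$ (already analyzed in Lemma~\ref{lem-highgenus}), $\ell$ copies of $R_2=1212$, and the word $U_h$, and then to track how the genus range and the relevant end-edge conditions propagate through each connected sum using Lemma~\ref{lem-connectsum}. Concretely, I would set $V(h,k,\ell) = U_h\, {R_2}^{\ell}\, X^{k}$ (with the appropriate renaming so that this is a single double-occurrence word on $n = h + 2\ell + 4k$ letters). When $h=0$ the block $U_h$ is absent and $V(0,k,\ell) = {R_2}^{\ell} X^k$.

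The argument then proceeds block by block. First, by the proof of Lemma~\ref{lem-highgenus}, $X^k$ has genus range $[k,2k]$ and is of $A(\mathrm{min},2)$ and $E(\mathrm{max},2)$. Next, recall from the proof of Lemma~\ref{lem-1212} that $R_2$ has genus range $[0,1]$ and is of $A(\mathrm{min},2)$ and $A(\mathrm{max},2)$ (hence $E(\mathrm{max},2)$). Prepending one copy of $R_2$ to a word $W$ with $\gr(W)=[g,g']$ gives, by Lemma~\ref{lem-connectsum} cases $(E_0)$ and $(E'_0)$, genus range $[g, g'+1]$; this is exactly Lemma~\ref{lem-1212}. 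Applying this $\ell$ times to $X^k$ yields $\gr({R_2}^{\ell} X^k) = [k, 2k+\ell]$, which already settles the case $h=0$. For the $h>0$ case I would first check that ${R_2}^{\ell} X^k$ remains of $E(\mathrm{max},2)$: since the leftmost block is $R_2$, which is $A(\mathrm{max},2)$, and a connected sum in which one factor is $A(\mathrm{max},2)$ produces a word whose new end edge is traced by two curves at maximum genus (case (C) of Fig.~\ref{connect}), the $E(\mathrm{max},2)$ property is preserved through all $\ell$ sums. Likewise ${R_2}^{\ell} X^k$ is of $A(\mathrm{min},2)$, since both $R_2$ and $X^k$ are, and a connected sum of two $A(\mathrm{min},2)$ diagrams is again $A(\mathrm{min},2)$ (case (C) of Fig.~\ref{connect}, as used in Lemma~\ref{lem-highgenus}).

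Finally I would attach $U_h$ on the left. As in the proofs of Lemmas~\ref{lem-range2} and \ref{lem-hk}, $U_h$ has genus range $[0,1]$, is of $A(\mathrm{min},2)$, and is of $A(\mathrm{max},1)$. Forming $U_h\,({R_2}^{\ell} X^k)$: Lemma~\ref{lem-connectsum} $(E_0)$ gives $\epsilon=0$ (the factor $U_h$ is $A(\mathrm{min},2)$), so the minimum genus is $0 + k - 0 = k$; and Lemma~\ref{lem-connectsum} $(E'_0)$ gives $\epsilon'=0$ (the factor ${R_2}^{\ell}X^k$ is $E(\mathrm{max},2)$), so the maximum genus is $1 + 2k+\ell - 0 = 2k+\ell+1$. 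Hence $\gr(V(h,k,\ell)) = [k, 2k+\ell+1]$ with $n = h+2\ell+4k$ chords, as claimed; by Proposition~\ref{lem-consec} this interval is realized as a genus range. The boundary cases $k=0$ and $\ell=0$ are handled by the same computation (for $k=0$ the block $X^k$ is empty, and one invokes the $\gr(U_h)=[0,1]$ computation directly as in Lemma~\ref{lem-hk}). The only delicate point, and the one I would write out most carefully, is the bookkeeping of which end-edge condition ($A$ versus $E$, $\mathrm{min}$ versus $\mathrm{max}$, $1$ versus $2$) survives each connected sum; this is the step most prone to an off-by-one or a mismatched case of Lemma~\ref{lem-connectsum}, but it is entirely parallel to the propagation arguments already carried out in Lemmas~\ref{lem-highgenus} and \ref{lem-hk}.
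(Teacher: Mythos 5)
Your proposal is correct and takes essentially the same route as the paper, which sets $V(h,k,\ell)=U_h X^k G_\ell$ (note $G_\ell={R_2}^{\ell}$, so your word is the same diagram up to reordering the blocks) and likewise propagates $A({\rm min},2)$ and $E({\rm max},2)$ through the connected sums before concluding with cases $(E_0)$ and $(E'_0)$ of Lemma~\ref{lem-connectsum}. One small precision: the preservation of $E({\rm max},2)$ should be justified by both factors being $E({\rm max},2)$ (so that case (C) of Fig.~\ref{connect} is available), not merely by $R_2$ being $A({\rm max},2)$ — if the other factor were only $A({\rm max},1)$ one would be in case (B) — but your induction does supply the needed $E({\rm max},2)$ for the other factor, so the argument stands.
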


\begin{proof}
We consider $V(h, k, \ell)= W(h,k) G_\ell = U_h X^k G_\ell$. We see that $G_\ell$ is of $A({\rm min}, 2)$ 
and $E({\rm max}, 2)$ inductively from the proof of Lemma~\ref{lem-1212}. Hence the  minimum genus of 
$V (h,k,\ell )$ is $k$ for any $h, k, \ell \geq 0$ by Lemma~\ref{lem-connectsum} $(E_0)$. 

Recall that $X^k$ is of $E({\rm max}, 2)$ and $G_\ell$ is of $E({\rm max}, 2)$ for any $k, \ell >0$.   
 Hence $V(0, k, \ell)$ is of $E({\rm max}, 2)$ and $\gr( V(0, k, \ell)) = \gr(X^kG_\ell) =[k, 2k+ \ell]$ by 
Lemma~\ref{lem-connectsum} $(E'_0)$, proving the second statement of the lemma. Because $V(0, k, \ell)$ 
is of $E({\rm max}, 2)$ and $\gr( U_h)=[0,1]$, Lemma~\ref{lem-connectsum} $(E_0')$ implies  
$V (h,k, \ell) = [k, 2k + \ell + 1 ]$ for $h>0$, $k, \ell \geq 0$.
\end{proof}

\begin{proposition}
For any $g, g'$ such that $g' \ge 2g $ there is a chord diagram with genus range $[g, g']$.
\end{proposition}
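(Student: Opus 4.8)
The plan is to read the result off directly from Lemma~\ref{lem-morehk}. Given $g, g'$ with $g' \ge 2g$, note first that $g' \ge 1$ may be assumed, since otherwise $g = g' = 0$ and $[g,g']$ is a singleton, which is not realizable by Lemma~\ref{lem-nosingleton}. Now I would set $h = 0$, $k = g$, and $\ell = g' - 2g$; the hypothesis $g' \ge 2g$ is precisely what makes $\ell \ge 0$, so the $h = 0$ clause of Lemma~\ref{lem-morehk} applies and yields the chord diagram $V(0, g, g' - 2g) = X^g G_{g'-2g}$ on $n = 4g + 2(g' - 2g) = 2g'$ chords, with genus range $[k, 2k + \ell] = [g, 2g + (g' - 2g)] = [g, g']$. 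That is exactly the claim.

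First I would check that the extreme instances of this substitution are legitimate. When $\ell = 0$, i.e.\ $g' = 2g$, the diagram is $X^g$, whose genus range $[g, 2g]$ is the one recorded in the remark following Lemma~\ref{lem-highgenus}; when $k = 0$, i.e.\ $g = 0$, the diagram is $G_{g'}$, whose genus range $[0, g']$ is Lemma~\ref{lem-repeatrepeat}; and for all remaining $(g, g')$ Lemma~\ref{lem-morehk} delivers both the diagram and its genus range outright. So no separate construction is needed for any admissible pair.

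I do not expect a genuine obstacle here: the statement is essentially a corollary of the realization lemmas assembled earlier in the section, the free parameters $k$ and $\ell$ having been arranged so that $k$ fixes the minimum genus while each extra copy of $R_2$ inside $G_\ell$ raises the maximum genus by exactly $1$, so that together they sweep out precisely the region $g' \ge 2g$. The only point worth stating explicitly is the degenerate corner $g = g' = 0$, where the formula would return the empty chord diagram ($n = 2g' = 0$) and $[0,0]$ is not a genus range; with that case set aside, the one-line argument above finishes the proof.
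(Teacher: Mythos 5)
Your proposal is correct and essentially identical to the paper's proof: the paper also takes $V(0,g,\ell)$ with $\ell=g'-2g$ from Lemma~\ref{lem-morehk} for $g'>2g$, and simply handles the boundary case $g'=2g>0$ by citing Lemma~\ref{lem-highgenus} instead of the $\ell=0$ instance of Lemma~\ref{lem-morehk} --- which is the same diagram $X^g$ anyway. Your explicit exclusion of the degenerate pair $g=g'=0$ (not realizable by Lemma~\ref{lem-nosingleton}) is a reasonable reading of the tacit assumption in the paper's statement.
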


\begin{proof}
For $g' >2g$, we set $\ell=g' - 2g$. Then the chord diagram $V(0,k,\ell)$ in 
Lemma~\ref{lem-morehk} has genus range $[g, g']$. If $g'=2g > 0$  then Lemma~\ref{lem-highgenus} 
provides a desired chord diagram.
\end{proof}

\section{Towards Characterizing Genus Ranges} \label{sec-chara}

In this section we state and prove the main theorem. Recall from Lemma~\ref{lem-euler} that any chord 
diagram of $n$ chords, the genus $g$ of a thickened diagram is at most $\lceil n/2 \rceil$. 

\begin{theorem}
There exists  a chord diagram with $n$ chords and genus range $[g, g']$ whenever $g, g'$  satisfy one 
of the following conditions: 
(1) $g'=2g$ and either $g=1$ or $g'=\lceil n/2 \rceil$, or
(2) $0 \leq  2 g < g' \leq \lceil n/2 \rceil$.
\end{theorem}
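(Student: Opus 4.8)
The plan is to assemble the theorem from the realization lemmas already established, handling the cases by the size of the gap $g'-2g$, together with the number of chords $n$. First I would record the easy building blocks: Lemma~\ref{lem-range2} gives a chord diagram with $n$ chords and genus range $[0,1]$ for every $n>0$ and one with genus range $[1,2]$ for every $n>2$; Lemma~\ref{lem-highgenus} gives, for each $k>0$, a chord diagram with $4k-1$ or $4k$ chords and genus range $[k,2k]$; and Lemma~\ref{lem-morehk} gives $V(h,k,\ell)$ with $n=4k+2\ell+h$ chords and genus range $[k,2k+\ell+1]$ for $h>0$, or $V(0,k,\ell)$ with $n=4k+2\ell$ chords and genus range $[k,2k+\ell]$.

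For case (1): if $g=1$ and $g'=2$, use the $[1,2]$ diagram of Lemma~\ref{lem-range2}(2) for any $n>2$, and note $\lceil n/2\rceil\ge 2$ forces $n\ge 3$ anyway so there is nothing to prove for small $n$. If instead $g'=2g=\lceil n/2\rceil$, then $n=4g-1$ or $n=4g$ exactly, and Lemma~\ref{lem-highgenus} (with $k=g$) supplies a diagram with precisely that many chords and genus range $[g,2g]$; the special case $g=0$ gives $n\le 0$, which is excluded, so $g\ge 1$ and the lemma applies.

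For case (2), where $0\le 2g<g'\le\lceil n/2\rceil$: I would take $V(h,k,\ell)$ with $k=g$ and $\ell$ chosen to hit the target maximum genus, then pad with $h$ extra ``$U_1$-type'' chords to reach the prescribed $n$. Concretely, set $k=g$. If the number of leftover chords after accounting for $X^g$ is odd one uses the $h>0$ branch (genus range $[g,2g+\ell+1]$, so take $\ell=g'-2g-1\ge 0$), and if it is even one uses the $h=0$ branch (genus range $[g,2g+\ell]$, so take $\ell=g'-2g$); in either case $n=4g+2\ell+h$ and one checks $h\ge 0$. The constraint $g'\le\lceil n/2\rceil$ is exactly what guarantees $n$ is large enough that a nonnegative $h$ exists: from $n\ge 2g'\ge 4g+2\ell$ (suitably interpreted with the ceiling) we get $h=n-4g-2\ell\ge 0$. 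When $g=0$ this degenerates to $V(0,0,\ell)=G_\ell$ (or $U_hG_\ell$), whose ranges $[0,\ell]$ and $[0,\ell+1]$ are given by Lemmas~\ref{lem-repeatrepeat} and~\ref{lem-repeatrepeat11}, so the base case is covered.

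The main obstacle I anticipate is the bookkeeping at the boundary of the parameter region — matching parities of $n$ against the ``$4k-1$ or $4k$'' and ``$4k+2\ell+h$'' forms so that \emph{every} admissible pair $(n,g,g')$ is actually hit, not just infinitely many. In particular one must be careful that when $g'=\lceil n/2\rceil$ with $g'>2g$ there are still enough chords to spend on $G_\ell$ (this is where $\lceil n/2\rceil$ versus $n/2$ for odd $n$ matters, and why Lemma~\ref{lem-morehk} was stated with both the $h=0$ and $h>0$ variants and why Lemma~\ref{lem-highgenus} allows $n=4k-1$ as well as $4k$). I would organize the write-up as a short case table indexed by the parity of $n-4g$ and by whether $g'=2g$ or $g'>2g$, citing the appropriate lemma in each cell, and verifying the single inequality $h\ge 0$ (equivalently $\ell\ge 0$) in each.
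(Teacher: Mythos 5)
Your overall route is the same as the paper's: case (1) from Lemma~\ref{lem-range2}~(2) and Lemma~\ref{lem-highgenus} with $k=g$, and case (2) from $V(h,g,\ell)=U_hX^gG_\ell$ of Lemma~\ref{lem-morehk}. The genuine gap is in your concrete parameter choice for case (2) when $n$ is even and $2g<g'<n/2$. You split by the parity of the leftover $n-4g$ and prescribe, for even leftover, the $h=0$ branch with $\ell=g'-2g$. But with $h=0$ the chord count is forced to be $4g+2\ell=2g'$, so this recipe only produces a diagram with $2g'$ chords, not $n$: e.g.\ for $n=10$, $g=1$, $g'=3$ it gives $XG_1$ with $6$ chords, and ``checking $h\ge0$'' cannot repair it because no free parameter remains. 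If instead you keep $\ell=g'-2g$ and pad with $h=n-2g'>0$ copies of $U_1$ to reach $n$ chords, Lemma~\ref{lem-morehk} puts you in the $h>0$ branch, whose maximum genus is $2g+\ell+1=g'+1$, overshooting the target. Either way, the even-$n$ interior subcase --- which is most of case (2) --- is not covered as written.

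The repair stays entirely inside your toolkit and is what the paper does: for even $n$ with $g'<n/2$, take $\ell=g'-2g-1\ge0$ and a positive \emph{even} $h$ with $4g+2\ell+h=n$, so that $\gr(V(h,g,\ell))=[g,2g+\ell+1]=[g,g']$. In fact the uniform choice $\ell=g'-2g-1$, $h=n-2g'+2$ handles all of case (2) at once: $\ell\ge0$ is $g'>2g$, and $h\ge1$ is exactly $g'\le\lceil n/2\rceil$, so no parity split (and no use of the $h=0$ branch) is needed. The paper instead splits into $n=2m-1$, $n=2m$ with $m=g'$, and $n=2m$ with $m>g'$; in that last subcase its stated $h'=m-g'$ is a minor slip for $h'=m-2g-\ell=m-g'+1$, but the construction is the one just described. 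Your case (1), your odd-$n$ computation (which amounts to $h=n-2g'+2>0$), and your $g=0$ remarks all match the paper's argument.
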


\begin{proof}
Let $m=\lceil n/2 \rceil $.
Case (1):  The case of $[g,g']=[1,2]$  follows from Lemma~\ref{lem-range2} (2). In the case of  
$0< g'=2g=m$, setting $k=g$ in Lemma~\ref{lem-highgenus}, we obtain a chord diagram with genus 
range $[k, 2k]=[g, g']$ with $n=2m=4k$ or $n=2m-1=4k-1$ as desired.

Case (2): Suppose  $0 \leq  2 g < g' \leq m$. First  we consider the case $n=2m-1$. Set 
$\ell=g'-2g-1 \geq 0$ and $h=2m- 2g' +1 >0$. Then $V(h, g, \ell)$ in Lemma~\ref{lem-morehk}
has genus range $[g, 2g+\ell +1]=[g, g']$ and the number of chords is 
$4g + 2 \ell  + h= 4g + 2( g' - 2g  -1) + (2m-2g' +1 )= 2m-1=n, $ as desired. 

Next we consider the case $n=2m$. If $m=g'$,  set $\ell= g' - 2g >0$. Then the  chord diagram 
$V(0, g,  \ell)$ in Lemma~\ref{lem-morehk} has genus range $[g,  2g + \ell]=[g, g']$ and the number 
of chords $4g + 2 \ell = 2g'=2m=n$, as desired. If  $m> g'$, set  $\ell = g' - 2g -1$ and 
$h'=  m - g' = m - 2g - \ell \geq 0$. Then the chord diagram corresponding to $V(2h', g, \ell)$ in 
Lemma~\ref{lem-morehk} has genus range $[g,  2g + \ell + 1 ]=[g, g']$ and the number of chords 
$4g + 2 \ell + 2h' = 2g' + 2h'=2m=n$, as desired. 
\end{proof}

\begin{figure}[h]
  \centering
  \begin{overpic}[width=6cm]{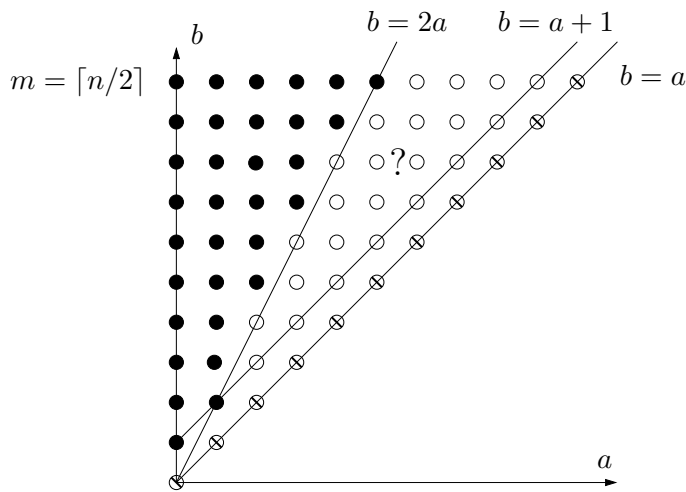}
    \put(-35,89){$m=\lceil n/2 \rceil $}
    \put(5,99){$b$}
    \put(44,102){$b=2a$}
    \put(73,102){$b=a+1$}
    \put(100,90){$b=a$}
    \put(49,70){\Large ?}
    \put(95,5){$a$}
  \end{overpic}
  \caption{Realizing genus ranges for chord diagrams with $n$ chords.}
  \label{gr}
\end{figure}

 The situation of the theorem is represented in the graph of Fig.~\ref{gr}. Each lattice point of coordinate 
$(a,b)$ represents the genus range $[a,b]$. A black dot represents that there is a chord diagram of the 
corresponding genus range. A circle with backslash inside, located on the line $b=a$, represents that there 
is no singleton genus range by Lemma~\ref{lem-nosingleton}. White dots between two lines $b=a$ and 
$b=2a$, and those on the line $b=2a$, denote the cases for which we do not know whether there are 
diagrams of those ranges. Note that only two points are realized on the lines $b=2a$ and $b=a+1$. Other 
points on the integer lattice, not indicated in the figure, are excluded from the Euler characteristic formula 
(Lemma~\ref{lem-euler}).

\section{Concluding Remarks}      \label{sec-concl}

In this paper, we studied  sets of  genus values, called the genus ranges, for thickened chord diagrams. 
Variations of surfaces occur when bands that correspond to chords are attached to outside circle boundary 
of the backbone of a chord diagram. Computer calculations and constructive methods were used to prove 
the results. For a fixed number of chords, we investigated which ranges can and cannot occur. It may be 
of interest to investigate the ranges for which we have not been able to determine  whether they can be 
realized or not.

\section*{Acknowledgements}
This research was partially supported by National Science Foundation  DMS-0900671 and National Institutes 
of Health  R01GM109459. The content is solely the responsibility of the authors and does not necessarily 
represent the official views of the NSF or NIH.


\end{document}